\newtheorem{theorem}{Theorem}[section]
\newtheorem{lemma}[theorem]{Lemma}
\newtheorem{corollary}[theorem]{Corollary}
\newtheorem{problem}[theorem]{Problem}
\theoremstyle{definition}
\newtheorem{definition}[theorem]{Definition}
\theoremstyle{remark}
\newtheorem{remark}[theorem]{Remark}
\numberwithin{equation}{section}
\begin{document}
\title[$\mathcal{C}^c-\mathcal{C}$]{The algebraic difference of a Cantor set and its complement}

\author[P. Nowakowski]{Piotr Nowakowski}
\address{Faculty of Mathematics and Computer Science,\newline \indent University of {\L}\'{o}d\'{z}, Banacha 22, 90-238 {\L}\'{o}d\'{z}, Poland\newline \indent ORCID: 0000-0002-3655-4991}
\email{\href{mailto:piotr.nowakowski@wmii.uni.lodz.pl}{piotr.nowakowski@wmii.uni.lodz.pl}}

\author[C.-H. Pan]{Cheng-Han Pan}
\address{Department of Mathematics and Statistics,\newline \indent Mount Holyoke College, South Hadley,\newline \indent Massachusetts 01075-1461, United States}
\email{\href{mailto:cpan@mtholyoke.edu}{cpan@mtholyoke.edu}}

\subjclass[2020]{Primary 28A05, 28A80}

\keywords{Algebraic difference of sets,
Cantor set,
Central Cantor set,
Lebesgue measure,
perturbed Cantor set,
Steinhaus theorem}

\date{Draft of July 24, 2025}
\dedicatory{}

\commby{}

\begin{abstract}
Let $\mathcal{C}\subseteq[0,1]$ be a Cantor set. In the classical $\mathcal{C}\pm\mathcal{C}$ problems, modifying the ``size'' of $\mathcal{C}$ has a magnified effect on $\mathcal{C}\pm\mathcal{C}$. However, any gain in $\mathcal{C}$ necessarily results in a loss in $\mathcal{C}^c$, and vice versa. This interplay between $\mathcal{C}$ and its complement $\mathcal{C}^c$ raises interesting questions about the delicate balance between the two, particularly in how it influences the ``size'' of $\mathcal{C}^c-\mathcal{C}$. One of our main results indicates that the Lebesgue measure of $\mathcal{C}^c-\mathcal{C}$ has a greatest lower bound of $\frac{3}{2}$.
\end{abstract}

\maketitle

\section{Introduction}
Let $\mathfrak{C}\subseteq[0,1]$ denote the classical Cantor ternary set. A standard construction of $\mathfrak{C}$ is to iteratively remove the open middle third of each interval in the current set, starting with the interval $[0,1]$. Despite the fact that $\mathfrak{C}$ is nowhere dense and has zero Lebesgue measure, it is well-known that the algebraic difference $\mathfrak{C}-\mathfrak{C}=\{x-y\in\mathbb{R}\colon x,y\in\mathfrak{C}\}$ is exactly the closed interval $[-1,1]$ (see \cite{St}). Another beautiful proof of this result can be found in \cite[sec. 3, ch, 8]{MR1996162}. Of course, the algebraic sum and difference of a vast variation of Cantor sets has been studied extensively in several papers (e.g. \cites{MR2534296,MR1289273,MR1637408,MR1267692,MR2566156,MR4535243,MR4824840,MR1491873,MR4574153,MR1153749,MR1617830,MR3950055}). This topic found applications for example in number theory (see \cite{MR22568}), dynamical systems (see e.g. \cite{MR893866}) or spectral theory (see \cite{MR2802305}). We asked ourselves a question what would happen if we replace one Cantor set in the difference by its complement. In this paper, our primary focus is on understanding the ``size'' of such a hybrid difference set $\mathcal{C}^c-\mathcal{C}$, where the relevant notions and terminology are introduced below.

\begin{definition}\label{DC}
A Cantor set $\mathcal{C}\subseteq[a,b]$ is a nowhere dense, perfect subset of $[a,b]$ that contains both endpoints $a$ and $b$. We denote its complement in $[a,b]$ by $\mathcal{C}^c$.
\end{definition}

Unless otherwise specified, we work with Cantor sets on $[0,1]$. To motivate the discussion, we start with the following question.

\begin{problem}\label{PC}
Is it true that $\mathfrak{C}^c-\mathfrak{C}=[-1,1]$? If not, how does it look like?
\end{problem}

One may notice that $-1,0,1\not\in\mathfrak{C}^c-\mathfrak{C}$ in a quick observation. In particular, $-1$ can only be written as $0-1$, but $0\not\in\mathfrak{C}^c$. $1$ can only be written as $1-0$, but $1\not\in\mathfrak{C}^c$. $0$ can only be written as $x-x$, but $x\in\mathfrak{C}^c$ and $x\in\mathfrak{C}$ cannot happen simultaneously. Does it miss any more values in $[-1,1]$? Yes, $[-1,1]\setminus(\mathfrak{C}^c-\mathfrak{C})$ is in fact countably infinite, and we will identify specifically each value which $\mathfrak{C}^c-\mathfrak{C}$ misses in $[-1,1]$ in \cref{CCP}. This naturally raises several questions about the "size" of the set $\mathcal{C}^c-\mathcal{C}$ for a general Cantor set $\mathcal{C}\subseteq[0,1]$. Our findings are listed below:
\begin{itemize}
\item[•] Some $\mathcal{C}^c-\mathcal{C}$ misses only $-1$, $0$, $1$ from $[-1,1]$. See \cref{TS3}.
\item[•] Some $\mathcal{C}^c-\mathcal{C}$ always misses a countable set from $[-1,1]$. See \cref{CCI}.
\item[•] Some $\mathcal{C}^c-\mathcal{C}$ misses a ``fat'' Cantor set from $[-1,1]$. See \cref{CSFC}.
\item[•] The Lebesgue measure of $\mathcal{C}^c-\mathcal{C}$ has a greatest lower bound of $\frac{3}{2}$. See \cref{CSPM}.
\end{itemize}

\section{Notations and Two Elementary Lemmas}
We begin by stating some general notations and two general lemmas that serve our future arguments. In particular, \cref{LLG} describes what $\mathcal{C}^c-\mathcal{C}$ must contain, and \cref{LS} describes what $\mathcal{C}^c-\mathcal{C}$ must not contain.

\begin{definition}\label{DG}\:
\begin{enumerate}[(i)]
\item A gap $G$ of a Cantor set $\mathcal{C}\subseteq[a,b]$ refers to a connected component of $\mathcal{C}^c$.
\item Let $I\subseteq\mathbb{R}$ be an interval, we denote $l(I)$ as its left end point, $r(I)$ as its right endpoint, $c(I)$ as its middle point, and $|I|$ as its length.
\end{enumerate}
\end{definition}

\begin{lemma}\label{LLG}
Let $\mathcal{C}\subseteq[0,1]$ be a Cantor set, and let $G$ be a gap of $\mathcal{C}$. Let $a\leq b$ be points in $\mathcal{C}$ such that $[a,b]$ does not contain $G$.
\begin{enumerate}[(i)]
\item If $G$ is strictly longer than every gap of $\mathcal{C}\cap[a,b]$,
\begin{itemize}
\item[] then $(l(G)-b,r(G)-a)\subseteq\mathcal{C}^c-\mathcal{C}$.
\end{itemize}
\item If $G$ is longer than or equal to every gap of $\mathcal{C}\cap[a,b]$,
\begin{itemize}
\item[] then there is a finite set $F$ such that $(l(G)-b,r(G)-a)\setminus F\subseteq\mathcal{C}^c-\mathcal{C}$.
\end{itemize}
\end{enumerate}
\end{lemma}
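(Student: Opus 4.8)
The plan is to certify each relevant $t$ as an element of $\mathcal{C}^c-\mathcal{C}$ by exhibiting a single point $y\in\mathcal{C}\cap[a,b]$ whose translate $y+t$ falls inside the open gap $G$; since $G$ is a connected component of $\mathcal{C}^c$ we have $G\subseteq\mathcal{C}^c$, so such a $y$ gives $t=(y+t)-y\in\mathcal{C}^c-\mathcal{C}$. Write $K:=\mathcal{C}\cap[a,b]$, so $\min K=a$ and $\max K=b$, and note that $t\in(l(G)-b,r(G)-a)$ says precisely that $a+t<r(G)$ and $b+t>l(G)$. Thus everything reduces to showing that the compact set $K+t$ meets the open interval $G=(l(G),r(G))$.

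First I would clear the easy cases: if $a+t>l(G)$ then $a+t\in G$ and $y=a$ works, and if $b+t<r(G)$ then $b+t\in G$ and $y=b$ works. The remaining case is $a+t\le l(G)$ and $b+t\ge r(G)$, where $K+t$ straddles $G$ on both sides; here I would argue by contradiction, assuming $(K+t)\cap G=\emptyset$. Set $p:=\max\bigl((K+t)\cap(-\infty,l(G)]\bigr)$ and $q:=\min\bigl((K+t)\cap[r(G),\infty)\bigr)$; these exist because $K+t$ is compact and the two sets are nonempty, containing $a+t$ and $b+t$ respectively. Any element of $K+t$ strictly between $p$ and $q$ would have to lie in $(l(G),r(G))=G$, contradicting the assumption, so $(p,q)$ is disjoint from $K+t$. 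Translating by $-t$, the interval $(p-t,q-t)$ lies in $[a,b]\setminus\mathcal{C}$ and has its endpoints in $K$; maximality of $p$ and minimality of $q$ force it to be a whole connected component of $[a,b]\setminus\mathcal{C}$, i.e.\ a gap of $\mathcal{C}\cap[a,b]$, of length $q-p\ge r(G)-l(G)=|G|$.

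For part (i) this immediately contradicts the hypothesis that $G$ is strictly longer than every gap of $\mathcal{C}\cap[a,b]$, so the straddling case cannot yield an empty intersection and the claim holds with $F=\emptyset$. For part (ii), combining the hypothesis $|G|\ge q-p$ with $q-p\ge|G|$ gives $q-p=|G|$, and then $p\le l(G)<r(G)\le q$ pins down $p=l(G)$ and $q=r(G)$; hence $t=l(G)-l(H)$, where $H:=(p-t,q-t)$ is a gap of $\mathcal{C}\cap[a,b]$ with $|H|=|G|$. Therefore the only $t$ for which the construction can fail lie in $F:=\{\,l(G)-l(H)\colon H\text{ a gap of }\mathcal{C}\cap[a,b]\text{ with }|H|=|G|\,\}$, which is finite because such gaps are pairwise disjoint subintervals of $[a,b]$ of the common positive length $|G|$.

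I expect the main work to be bookkeeping rather than any single hard idea: one must check carefully that $(p-t,q-t)$ is an actual connected component of $[a,b]\setminus\mathcal{C}$ and not merely a subinterval, handle the boundary equalities $a+t=l(G)$ and $b+t=r(G)$ without slips, and dispose of the degenerate case $a=b$ (then $K=\{a\}$ has no gaps, both hypotheses hold vacuously, and the statement is immediate with $F=\emptyset$). It is also worth recording why the standing hypothesis that $[a,b]$ does not contain $G$ is natural: if $G\subseteq[a,b]$, then $G$ would itself be a gap of $\mathcal{C}\cap[a,b]$, so ``$G$ strictly longer than every gap of $\mathcal{C}\cap[a,b]$'' could never hold, making part (i) vacuous.
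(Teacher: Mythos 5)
Your proposal is correct and is essentially the paper's own argument viewed from the other side: you translate $\mathcal{C}\cap[a,b]$ by $t$ and ask whether it meets $G$, while the paper translates $G$ by $-y$ and asks whether it meets $\mathcal{C}$, with the same case split (an endpoint $a$ or $b$ lands in the gap, versus the straddling case where no gap of $\mathcal{C}\cap[a,b]$ is long enough to swallow $G$) and the same finite exceptional set of exact gap-to-gap translations in part (ii). Your $p$--$q$ maximality argument just makes explicit the step the paper asserts directly, namely that an interval of length $|G|$ inside $[a,b]$ avoiding $\mathcal{C}$ would have to be a gap of $\mathcal{C}\cap[a,b]$ of length at least $|G|$.
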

\begin{proof}
Since $G\subseteq\mathcal{C}^c$, it is easy to see that $(G-y)\cap\mathcal{C}\neq\emptyset$ implies $y\in\mathcal{C}^c-\mathcal{C}$. Indeed, if $(G-y)\cap\mathcal{C}\neq\emptyset$, then there are $g\in G\subseteq\mathcal{C}^c$ and $c\in\mathcal{C}$ such that $g-y=c$, or equivalently, $y=g-c\in\mathcal{C}^c-\mathcal{C}$. To prove (i), it suffices to show that
\begin{center}
$(G-y)\cap\mathcal{C}\neq\emptyset$ for every $y\in(l(G)-b,r(G)-a)$.
\end{center}
Let $y\in(l(G)-b,r(G)-a)$. Then
\begin{equation*}
G-y\subseteq(l(G)-(r(G)-a),r(G)-(l(G)-b))=(a-|G|,b+|G|)\mbox{,}
\end{equation*}
and clearly $|G-y|=|G|$. If $G-y\subseteq[a,b]$, then since all gaps of $[a,b]$ are shorter than $G$, then $G-y$ must intersect $\mathcal{C}$, that is, there is $c\in(G-y)\cap\mathcal{C}$. On the other hand, if $G-y\not\subseteq[a,b]$, then $G-y$ must intersect the boundary of $[a,b]$, that is, either $a\in G-y$ or $b\in G-y$. Since both $a$ and $b$ belong also to $\mathcal{C}$, it again follows that $(G-y)\cap\mathcal{C}\neq\emptyset$. See \cref{FG-y}.
\begin{figure}
\begin{tikzpicture}[scale=10]
\draw[](0,0)--(1,0)
node[pos=0,anchor=east]{$[0,1]=$}
node[pos=0,anchor=north]{$0$}
node[pos=8/10,anchor=south]{$G$}
node[pos=1,anchor=north]{$1$}
node[pos=1,anchor=south west]{$\mathcal{C}$};
\draw[line width=4pt,dashed](0/10,0)--(0.5/10,0);
\draw[line width=4pt,dashed](1/10,0)--(1.5/10,0);
\draw[line width=4pt,dashed](2/10,0)--(2.5/10,0);
\draw[line width=4pt,dashed](3/10,0)--(4/10,0);
\draw[line width=4pt,dashed](4.4/10,0)--(4.6/10,0);
\draw[line width=4pt,dashed](5/10,0)--(7/10,0);
\draw[line width=4pt,dashed](9/10,0)--(10/10,0);

\draw[](7/10,0.025)--(7/10,-0.025)node[pos=1,anchor=north]{$l(G)$};
\draw[](9/10,0.025)--(9/10,-0.025)node[pos=1,anchor=north]{$r(G)$};

\node[scale=1.5]at(7/10+0.005,0){$($};
\node[scale=1.5]at(9/10-0.005,0){$)$};
\node[scale=2]at(3/10,0){$\textbf{[}$};
\node[scale=2]at(6/10,0){$\textbf{]}$};

\node[scale=1.5]at(1/10+0.005,0.075){$\textbf{(}$};
\node[scale=1.5]at(3/10-0.005,0.075){$)$};
\node[scale=0.8]at(2/10,0.125){$G-(r(G)-a)$};
\node[scale=0.8]at(2/10,0.09){Does not};
\node[scale=0.8]at(2/10,0.06){include $a$};

\node[scale=1.5]at(3.5/10+0.005,0.075){$($};
\node[scale=1.5]at(5.5/10-0.005,0.075){$)$};
\node[]at(4.5/10,0.125){$G-y$};
\node[scale=0.8]at(4.5/10,0.09){Longer than};
\node[scale=0.7]at(4.5/10,0.06){any gap of $[a,b]$};

\node[scale=1.5]at(6/10+0.005,0.075){$($};
\node[scale=1.5]at(8/10-0.005,0.075){$\textbf{)}$};
\node[scale=0.8]at(7/10,0.125){$G-(l(G)-b)$};
\node[scale=0.8]at(7/10,0.09){Does not};
\node[scale=0.8]at(7/10,0.06){include $b$};

\draw[](3/10,0.1)--(3/10,-0.03)node[pos=1,anchor=north]{$a$};
\draw[](6/10,0.1)--(6/10,-0.03)node[pos=1,anchor=north]{$b$};
\draw[<-](3.9/10,0.03)--(3.9/10,-0.03)node[pos=1,anchor=north]{$c$};

\draw[thick](1/10,0.1)--(1/10,-0.05)node[pos=1,anchor=north]{$a-|G|$};
\draw[thick](8/10,0.1)--(8/10,0.045);
\draw[thick](8/10,0.005)--(8/10,-0.05)node[pos=1,anchor=north]{$b+|G|$};
\node[xscale=-1,yscale=1]at(3.25/10,0.075){$\rightsquigarrow$};
\node[]at(5.75/10,0.075){$\rightsquigarrow$};
\end{tikzpicture}
\caption{Illustration of the key idea in the proof of (i) of \cref{LLG}. To ensure that $G-y$ intersects $\mathcal{C}\cap[a,b]$, $G-y$ must be restricted within $(a-|G|,b+|G|)$. This requires that the value of $y$ lies strictly between $l(G)-b$ and $r(G)-a$. Depending on the position of $G-y$, it intersects either some $c\in\mathcal{C}\cap[a,b]$, or one of the endpoints $a\in\mathcal{C}$ or $b\in\mathcal{C}$.}\label{FG-y}
\end{figure}
Therefore, $(G-y)\cap\mathcal{C}\neq\emptyset$ for every $y\in(l(G)-b,r(G)-a)$.

The proof of (ii) is analogous except for those $y \in (l(G)-b,r(G)-a)$ for which $G-y=H'$ for some gap $H'$ contained in $[a,b]$. See \cref{FGHH}.
\begin{figure}
\begin{tikzpicture}[scale=10]
\draw[](0,0)--(1,0)
node[pos=0,anchor=east]{$[0,1]=$}
node[pos=0,anchor=north]{$0$}
node[pos=2/10,anchor=south]{$G-y'$}
node[pos=2/10,anchor=north]{$H'$}
node[pos=4.5/10,anchor=south]{$G-y''$}
node[pos=4.5/10,anchor=north]{$H''$}
node[pos=8/10,anchor=south]{$G$}
node[pos=1,anchor=north]{$1$}
node[pos=1,anchor=south west]{$\mathcal{C}$};
\draw[line width=4pt,dashed](0/10,0)--(1/10,0);
\draw[line width=4pt,dashed](3/10,0)--(3.5/10,0);
\draw[line width=4pt,dashed](5.5/10,0)--(7/10,0);
\draw[line width=4pt,dashed](9/10,0)--(10/10,0);

\draw[](1/10,0.025)--(1/10,-0.025)node[pos=1,anchor=north]{$l(H')$};
\draw[](3.5/10,0.025)--(3.5/10,-0.025)node[pos=1,anchor=north]{$l(H'')$};
\draw[](7/10,0.025)--(7/10,-0.025)node[pos=1,anchor=north]{$l(G)$};
\draw[](9/10,0.025)--(9/10,-0.025)node[pos=1,anchor=north]{$r(G)$};

\node[scale=1.5]at(7/10+0.005,0){$($};
\node[scale=1.5]at(9/10-0.005,0){$)$};
\node[scale=2]at(0.3/10,0){$\textbf{[}$};
\node[scale=2]at(6/10,0){$\textbf{]}$};

\node[scale=1.5]at(1/10+0.005,0){$($};
\node[scale=1.5]at(3/10-0.005,0){$)$};

\node[scale=1.5]at(3.5/10+0.005,0){$($};
\node[scale=1.5]at(5.5/10-0.005,0){$)$};

\draw[](0.3/10,0)--(0.3/10,-0.03)node[pos=1,anchor=north]{$a$};
\draw[](6/10,0)--(6/10,-0.03)node[pos=1,anchor=north]{$b$};
\end{tikzpicture}
\caption{Illustration of the key idea in the proof of (ii) of \cref{LLG}. $H'$ and $H''$ are gaps in $\mathcal{C}\cap[a,b]$ with the same length as $G$. $G$ can be completely shifted into $H'$ and $H''$ by some $y'=l(G)-l(H')$ and $y''=l(G)-l(H'')$ respectively. Besides $H'$ and $H''$, $G-y$ must still intersect $\mathcal{C}\cap[a,b]$ at somewhere since no other gap is long enough to contain $G-y$.
}\label{FGHH}
\end{figure}
But this situation can occur only finitely many times since $[a,b]$ cannot host any infinite number of gaps of the same length. Therefore, there exists a finite set $F$ such that
\begin{equation*}
(l(G)-b,r(G)-a)\setminus F\subseteq\mathcal{C}^c-\mathcal{C}\mbox{.}
\end{equation*}
\end{proof}

\begin{lemma}\label{LS}
Let $\mathcal{C}\subseteq[0,1]$ be a Cantor set, and define $S\coloneqq[-1,1]\setminus(\mathcal{C}^c-\mathcal{C})$. For any nonempty $Y\subseteq[-1,1]$,
\begin{center}
\begin{tabular}{rcl}
$Y\subseteq S$&if and only if&$(\mathcal{C}+Y)\cap[0,1]\subseteq\mathcal{C}$.
\end{tabular}
\end{center}
\end{lemma}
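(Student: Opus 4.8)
The plan is to prove the biconditional directly by unwinding the definitions of $S$ and of the algebraic difference. Recall $S=[-1,1]\setminus(\mathcal{C}^c-\mathcal{C})$, so a point $y\in[-1,1]$ lies in $S$ precisely when $y\notin\mathcal{C}^c-\mathcal{C}$, i.e.\ when there is \emph{no} pair $(u,c)$ with $u\in\mathcal{C}^c$, $c\in\mathcal{C}$ and $u-c=y$. Writing $u=c+y$, this says: for every $c\in\mathcal{C}$, the point $c+y$ is \emph{not} in $\mathcal{C}^c$. So far I have not used that $\mathcal{C}^c$ is the complement \emph{inside $[0,1]$}; the subtlety is that $\mathcal{C}^c=[0,1]\setminus\mathcal{C}$, so ``$c+y\notin\mathcal{C}^c$'' means ``$c+y\notin[0,1]$ or $c+y\in\mathcal{C}$''. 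Hence $y\in S$ iff for every $c\in\mathcal{C}$, either $c+y\notin[0,1]$ or $c+y\in\mathcal{C}$; equivalently, whenever $c+y\in[0,1]$ we must have $c+y\in\mathcal{C}$. That last reformulation is exactly the statement $(\mathcal{C}+y)\cap[0,1]\subseteq\mathcal{C}$.

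Concretely I would argue as follows. First fix a single $y\in[-1,1]$ and show $y\in S\iff(\mathcal{C}+y)\cap[0,1]\subseteq\mathcal{C}$. For the contrapositive in one direction: if $(\mathcal{C}+y)\cap[0,1]\not\subseteq\mathcal{C}$, pick $c\in\mathcal{C}$ with $c+y\in[0,1]\setminus\mathcal{C}=\mathcal{C}^c$; then $y=(c+y)-c\in\mathcal{C}^c-\mathcal{C}$, so $y\notin S$. Conversely, if $y\notin S$ then $y\in\mathcal{C}^c-\mathcal{C}$, so $y=u-c$ with $u\in\mathcal{C}^c\subseteq[0,1]$ and $c\in\mathcal{C}$; then $c+y=u\in[0,1]$ but $u\notin\mathcal{C}$, so $c+y\in(\mathcal{C}+y)\cap[0,1]$ witnesses $(\mathcal{C}+y)\cap[0,1]\not\subseteq\mathcal{C}$. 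This establishes the pointwise equivalence.

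To pass to a general nonempty $Y\subseteq[-1,1]$, I would simply quantify over $y$: by definition $Y\subseteq S$ iff $y\in S$ for every $y\in Y$, which by the pointwise equivalence holds iff $(\mathcal{C}+y)\cap[0,1]\subseteq\mathcal{C}$ for every $y\in Y$. It remains to observe that
\begin{equation*}
(\mathcal{C}+Y)\cap[0,1]=\bigcup_{y\in Y}\bigl((\mathcal{C}+y)\cap[0,1]\bigr)\mbox{,}
\end{equation*}
since $\mathcal{C}+Y=\bigcup_{y\in Y}(\mathcal{C}+y)$ and intersection distributes over union. Therefore $(\mathcal{C}+Y)\cap[0,1]\subseteq\mathcal{C}$ iff each $(\mathcal{C}+y)\cap[0,1]\subseteq\mathcal{C}$, and the chain of equivalences closes. (The nonemptiness of $Y$ is a harmless hypothesis here — it only rules out the vacuous case $\emptyset\subseteq S$ being paired with $(\mathcal{C}+\emptyset)\cap[0,1]=\emptyset\subseteq\mathcal{C}$, which is also true, so in fact the statement holds for $Y=\emptyset$ as well.)

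This lemma is essentially a bookkeeping exercise, so I do not expect a genuine obstacle; the only point that needs care is keeping straight that $\mathcal{C}^c$ means the complement relative to $[0,1]$ rather than relative to $\mathbb{R}$, which is why the clause ``$\cap[0,1]$'' appears on the right-hand side. The rest is a direct translation between ``$y$ is realized as a difference $u-c$'' and ``the translate $\mathcal{C}+y$ stays inside $\mathcal{C}$ wherever it stays inside $[0,1]$,'' followed by taking a union over $y\in Y$.
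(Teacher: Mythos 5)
Your proof is correct and takes essentially the same route as the paper's: both directions are a direct unwinding of the definitions of $S$ and of $\mathcal{C}^c-\mathcal{C}$, exhibiting a witness $c+y\in\mathcal{C}^c$ in one direction and a decomposition $y=x-c$ in the other. Your pointwise-then-union organization is a harmless repackaging of the paper's argument, which quantifies over $c\in\mathcal{C}$ and $y\in Y$ simultaneously.
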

\begin{proof}
Let $Y\subseteq[-1,1]$. Suppose there are $c\in\mathcal{C}$ and $y\in Y$ such that $c+y\in[0,1]$, but $c+y\not\in\mathcal{C}$. Then there is $x\in\mathcal{C}^c$ such that $x=c+y$, and we can write $y=x-c\in\mathcal{C}^c-\mathcal{C}$, and so $Y\not\subseteq S$.

Conversely, suppose that $(\mathcal{C}+Y)\cap[0,1]\subseteq C$ and $Y\not\subseteq S$. Take $y\in Y\setminus S$. Then by the definition of $S$, there must exist $x\in\mathcal{C}^c$ and $c\in\mathcal{C}$ such that $y=x-c$. This implies $c+y=x\in\mathcal{C}+Y$, but since $x\in\mathcal{C}^c=[0,1]\setminus\mathcal{C}$, we have $(C+Y)\cap[0,1]\not\subseteq\mathcal{C}$, contradicting the assumption.
\end{proof}

\section{Case of central Cantor sets}\label{S2}
Recall that the classical Cantor ternary set $\mathfrak{C}\subseteq[0,1]$ can be constructed by iteratively removing the open middle third of each interval at every stage, starting with the interval $[0,1]$. An immediate generalization of this process is to remove the open middle portion of relative length $a_n\in(0,1)$ from each interval at the $n$th step. Following the notation and definitions in \cite{MR4535243}, let $\textbf{a}\coloneqq(a_n)\in(0,1)^{\mathbb{N}}$ be a sequence, and its corresponding central Cantor set $\mathcal{C}(\textbf{a})\subseteq[0,1]$ is then constructed as illustrated in \cref{FCC}.
\begin{figure}
\begin{tikzpicture}[scale=10]
\draw[](0,0+0.4)--(1,0+0.4)
node[pos=0,anchor=east]{$I=[0,1]=$};
\draw[line width=4pt](0,0+0.4)--(1/3,0+0.4);
\draw[line width=4pt](2/3,0+0.4)--(1,0+0.4);
\node[anchor=south]at(1/6,0.02+0.4){$I_{0}$};
\node[anchor=south]at(1/2,0+0.4){$P$};
\node[anchor=south]at(5/6,0.02+0.4){$I_{1}$};
\draw[decorate,decoration=brace,thick](1/3-0.01,0-0.02+0.4)--(0+0.01,0-0.02+0.4)node[pos=1/2,anchor=north]{$\frac{1-a_1}{2}$};
\draw[decorate,decoration=brace,thick](2/3-0.01,0-0.02+0.4)--(1/3+0.01,0-0.02+0.4)node[pos=1/2,anchor=north]{$a_{1}$};
\draw[decorate,decoration=brace,thick](1-0.01,0-0.02+0.4)--(2/3+0.01,0-0.02+0.4)node[pos=1/2,anchor=north]{$\frac{1-a_1}{2}$};

\draw[](0,0+0.2)--(1,0+0.2)
node[pos=0,anchor=east]{$I_0=$};
\draw[line width=4pt](0,0+0.2)--(3/8,0+0.2);
\draw[line width=4pt](5/8,0+0.2)--(1,0+0.2);
\node[anchor=south]at(1/6,0.02+0.2){$I_{00}$};
\node[anchor=south]at(1/2,0+0.2){$P_0$};
\node[anchor=south]at(5/6,0.02+0.2){$I_{01}$};
\draw[decorate,decoration=brace,thick](3/8-0.01,0-0.02+0.2)--(0+0.01,0-0.02+0.2)node[pos=1/2,anchor=north]{$\frac{1-a_1}{2}\cdot\frac{1-a_2}{2}$};
\draw[decorate,decoration=brace,thick](5/8-0.01,0-0.02+0.2)--(3/8+0.01,0-0.02+0.2)node[pos=1/2,anchor=north]{$ a_{2}\cdot\frac{1-a_1}{2}$};
\draw[decorate,decoration=brace,thick](1-0.01,0-0.02+0.2)--(5/8+0.01,0-0.02+0.2)node[pos=1/2,anchor=north]{$\frac{1-a_1}{2}\cdot\frac{1-a_2}{2}$};

\draw[dashed,->,line width=1pt](0,0-0.02+0.4)--(0,-0.2+0.02+0.4);
\draw [dashed,->,line width=1pt](1/3,0.4-0.02)to[out=-85,in=175](5/9,0.3-0.01)to[out=-5,in=175](7/9,0.3-0.02)to[out=-5,in=95](1-0.01,0.2+0.02);

\draw[](0,0)--(1,0)
node[pos=0,anchor=east]{$I_{t_1t_2\ldots t_n}=$};
\draw[line width=4pt](0,0)--(1/4,0);
\draw[line width=4pt](3/4,0)--(1,0);
\node[anchor=south]at(1/8,0.02){$I_{t_1t_2\ldots t_n0}$};
\node[anchor=south]at(1/2,0){$P_{t_1t_2\ldots t_n}$};
\node[anchor=south]at(7/8,0.02){$I_{t_1t_2\ldots t_n1}$};
\draw[decorate,decoration=brace,thick](1/4-0.01,0-0.02)--(0+0.01,0-0.02)node[pos=1/2,anchor=north]{$\prod_{k=1}^{n+1}\frac{1-a_k}{2}$};
\draw[decorate,decoration=brace,thick](3/4-0.01,0-0.02)--(1/4+0.01,0-0.02)node[pos=1/2,anchor=north]{$a_{n+1}\prod_{k=1}^{n}\frac{1-a_k}{2}$};
\draw[decorate,decoration=brace,thick](1-0.01,0-0.02)--(3/4+0.01,0-0.02)node[pos=1/2,anchor=north]{$\prod_{k=1}^{n+1}\frac{1-a_k}{2}$};
\end{tikzpicture}
\caption{Let $\textbf{a}\in(0,1)^{\mathbb{N}}$. The construction of a central Cantor set $\mathcal{C}(\textbf{a})\subseteq[0,1]$ starts with removing $P=(\frac{1-a_1}{2},\frac{1+a_1}{2})$, the open middle $a_1$ portion of $[0,1]$, from $[0,1]$. The remaining two intervals are denoted by $I_0$ on the left and $I_1$ on the right. The second iteration is then applied on both $I_0$ and $I_1$. In particular, removing $P_0$, the middle $a_2$ portion of $I_0$, from $I_0$ yields $I_{00}$ and $I_{01}$, and removing $P_1$, the middle $a_2$ portion of $I_1$, from $I_1$ yields $I_{10}$ and $I_{11}$. As the iteration goes on, $I_{t_1t_2\ldots t_n}$ represents a remaining subinterval at the end of $n$th step, where $t_1t_2\ldots t_n$ is a binary sequence of length $n$, and $\mathcal{C}(\textbf{a})\coloneqq\bigcap_{n=1}^{\infty}\bigcup_{\textbf{t}\in\{0,1\}^n}I_\textbf{t}$.}\label{FCC}
\end{figure}

Observe that for any $n\in\mathbb{N}$ the procedure of removing gaps in all intervals $I_\textbf{t}$ for $\textbf{t}\in\{0,1\}^n$ is identical. Therefore, we have the following fact.

\begin{lemma}\label{Lbrick}
Let $\textbf{\em a}\in(0,1)^\mathbb{N}$. For any $n\in\mathbb{N}$ and any $\textbf{\em t}\in\{0,1\}^n$, the sets $\mathcal{C}(\textbf{a})\cap I_{\underbrace{\scriptstyle00\ldots0}_{n}}$ and $\mathcal{C}(\textbf{\em a})\cap I_\textbf{\em t}$ are identical up to a shift.
\end{lemma}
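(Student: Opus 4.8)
The plan is to produce an explicit translation carrying $\mathcal{C}(\mathbf a)\cap I_{00\ldots0}$ (with $n$ zeros) onto $\mathcal{C}(\mathbf a)\cap I_{\mathbf t}$. First I would record the elementary fact, visible directly from the construction in \cref{FCC}, that every level-$n$ interval $I_{\mathbf s}$ with $\mathbf s\in\{0,1\}^n$ has one and the same length, namely $\ell_n:=\prod_{k=1}^{n}\frac{1-a_k}{2}$. Because of this, the affine bijection $\varphi_{\mathbf t}\colon I_{00\ldots0}\to I_{\mathbf t}$ that sends $l(I_{00\ldots0})$ to $l(I_{\mathbf t})$ is simply the translation $x\mapsto x+\bigl(l(I_{\mathbf t})-l(I_{00\ldots0})\bigr)$, and in particular an orientation-preserving isometry of $\mathbb{R}$.

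Next I would show, by induction on $m\geq n$, that for every tail $\mathbf s\in\{0,1\}^{m-n}$ one has $\varphi_{\mathbf t}(I_{0\ldots0\,\mathbf s})=I_{\mathbf t\,\mathbf s}$, where $0\ldots0$ denotes the string of $n$ zeros. The base case $m=n$ is exactly the defining property of $\varphi_{\mathbf t}$. For the inductive step, note that the passage from level $m$ to level $m+1$ consists, inside each level-$m$ interval, of deleting the open middle portion of relative length $a_{m+1}$ and keeping the two flanking closed subintervals (the left flank receiving label $0$, the right flank label $1$). Since $\varphi_{\mathbf t}$ is an orientation-preserving isometry, it carries the middle portion of $I_{0\ldots0\,\mathbf s}$ onto the middle portion of $I_{\mathbf t\,\mathbf s}$ and the left (resp.\ right) flank onto the left (resp.\ right) flank; that is, $\varphi_{\mathbf t}(I_{0\ldots0\,\mathbf s0})=I_{\mathbf t\,\mathbf s0}$ and $\varphi_{\mathbf t}(I_{0\ldots0\,\mathbf s1})=I_{\mathbf t\,\mathbf s1}$, completing the induction.

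Finally, since $\mathcal{C}(\mathbf a)=\bigcap_{m\geq n}\bigcup_{\mathbf u\in\{0,1\}^m}I_{\mathbf u}$, intersecting with $I_{00\ldots0}$ (resp.\ $I_{\mathbf t}$) leaves precisely $\bigcap_{m\geq n}\bigcup_{\mathbf s\in\{0,1\}^{m-n}}I_{0\ldots0\,\mathbf s}$ (resp.\ the same expression with $\mathbf t$ in place of $0\ldots0$), because at each level the subintervals not inside $I_{00\ldots0}$ (resp.\ $I_{\mathbf t}$) contribute nothing to the intersection. As $\varphi_{\mathbf t}$ is a homeomorphism matching these two nested families term by term by the induction above, it maps one intersection onto the other, so $\mathcal{C}(\mathbf a)\cap I_{\mathbf t}=\varphi_{\mathbf t}\bigl(\mathcal{C}(\mathbf a)\cap I_{00\ldots0}\bigr)$ is a translate of $\mathcal{C}(\mathbf a)\cap I_{00\ldots0}$, as claimed.

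I do not expect a genuine obstacle: the whole content is the bookkeeping statement that ``the construction below level $n$ does not see which level-$n$ interval it takes place in,'' and the only mild care needed is to keep the indexing of tails consistent and to use the equal-length observation, which is what makes $\varphi_{\mathbf t}$ an honest translation rather than a general affine map.
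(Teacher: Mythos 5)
Your argument is correct and is essentially a careful formalization of exactly what the paper does: the paper states \cref{Lbrick} as an immediate observation that the gap-removal procedure below level $n$ is identical in every level-$n$ interval (all of which have the same length $\prod_{k=1}^{n}\frac{1-a_k}{2}$), which is precisely the content of your translation $\varphi_{\mathbf t}$ and the inductive label-matching. No gap; your write-up just supplies the bookkeeping the paper leaves implicit.
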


In this section, we consider the class of central Cantor sets $\mathcal{C}(\textbf{a})\subseteq[0,1]$ and show that $\mathcal{C}(\textbf{a})^c-\mathcal{C}(\textbf{a})$ would always miss a countably infinite subset from $[-1,1]$.

\begin{theorem}\label{TALC}
For every $\textbf{\em a}\in(0,1)^{\mathbb{N}}$, the set $S\coloneqq[-1,1]\setminus(\mathcal{C}(\textbf{\em a})^c-\mathcal{C}(\textbf{\em a}))$ is at least countably infinite. In particular,
\begin{equation*}
S\supseteq\{0,\pm r(P),\pm r(P_1),\pm r(P_{11}),\ldots,\pm1\}\mbox{.}
\end{equation*}
\end{theorem}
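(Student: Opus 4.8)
The plan is to use \cref{LS}: to show that a point $y \in [-1,1]$ lies in $S$, it suffices to verify that $(\mathcal{C}(\textbf{a}) + y) \cap [0,1] \subseteq \mathcal{C}(\textbf{a})$. The point $0$ is handled immediately since $\mathcal{C}(\textbf{a}) + 0 = \mathcal{C}(\textbf{a})$, and $\pm 1 \in S$ follows from the elementary observation already made in the introduction (they can only be realized as $1-0$ or $0-1$, and neither $0$ nor $1$ lies in $\mathcal{C}(\textbf{a})^c$). So the heart of the matter is the family $\pm r(P), \pm r(P_1), \pm r(P_{11}), \dots$, and by the symmetry $x \mapsto -x$ (note $\mathcal{C}(\textbf{a})$ is symmetric about $\tfrac12$, so $\mathcal{C}(\textbf{a})^c - \mathcal{C}(\textbf{a})$ is symmetric about $0$), it is enough to treat the positive values $r(P_{\underbrace{\scriptstyle 1\ldots1}_{n}})$ for $n \ge 0$.

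First I would set up notation: write $y_n \coloneqq r(P_{1^n})$, the right endpoint of the gap removed at stage $n+1$ inside the interval $I_{1^n}$ (with $y_0 = r(P)$). The key geometric fact is that $r(P_{1^n})$ is the left endpoint $l(I_{1^n 1})$ of the next-level rightmost interval, and more importantly that $I_{1^n 1}$ sits flush against $1$, i.e., $r(I_{1^n1}) = 1$. I would then prove the containment $(\mathcal{C}(\textbf{a}) + y_n) \cap [0,1] \subseteq \mathcal{C}(\textbf{a})$ by analyzing where $\mathcal{C}(\textbf{a}) + y_n$ can land. Using \cref{Lbrick}, $\mathcal{C}(\textbf{a}) \cap I_{1^n 1}$ is a shifted copy of $\mathcal{C}(\textbf{a}) \cap I_{0^{n+1}}$, which is in turn a scaled copy of $\mathcal{C}(\textbf{a})$ itself sitting in the leftmost corner $[0, \prod_{k=1}^{n+1}\frac{1-a_k}{2}]$. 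The translation by $y_n = l(I_{1^n1})$ carries the scaled copy $\mathcal{C}(\textbf{a}) \cap I_{0^{n+1}}$ exactly onto $\mathcal{C}(\textbf{a}) \cap I_{1^n 1} \subseteq \mathcal{C}(\textbf{a})$. The remaining points of $\mathcal{C}(\textbf{a})$, namely those in $I_{0^{n+1}}^c \cap \mathcal{C}(\textbf{a})$... no — more carefully: one must check that $c + y_n \in [0,1]$ forces $c$ to lie in that corner copy. Since $y_n = 1 - \prod_{k=1}^{n+1}\frac{1-a_k}{2}$, the condition $c + y_n \le 1$ is exactly $c \le \prod_{k=1}^{n+1}\frac{1-a_k}{2} = r(I_{0^{n+1}})$, i.e. $c \in I_{0^{n+1}}$; and then $c + y_n \in I_{0^{n+1}} + y_n = I_{1^n1} \subseteq \mathcal{C}(\textbf{a})^{\text{-containing interval}}$, with $c + y_n$ actually in $\mathcal{C}(\textbf{a})$ because of \cref{Lbrick}. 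This establishes $(\mathcal{C}(\textbf{a}) + y_n) \cap [0,1] \subseteq \mathcal{C}(\textbf{a})$, hence $y_n \in S$ by \cref{LS}.

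Next I would verify that these points are genuinely distinct, so that $S$ is countably infinite rather than just nonempty. Since the construction strictly nests $I_{1^{n+1}1} \subsetneq I_{1^n 1}$ and each $a_k \in (0,1)$, the lengths $\prod_{k=1}^{n+1}\frac{1-a_k}{2}$ are strictly decreasing in $n$, so the values $y_n = 1 - \prod_{k=1}^{n+1}\frac{1-a_k}{2}$ form a strictly increasing sequence in $[0,1)$ converging to $1$; together with $0$ and $\pm1$ and the reflected copies $-y_n$, this is a countably infinite subset of $S$.

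The main obstacle I anticipate is the bookkeeping in the step $c + y_n \in [0,1] \Rightarrow c + y_n \in \mathcal{C}(\textbf{a})$: one has to be careful that the translated scaled copy lands \emph{on} $\mathcal{C}(\textbf{a})$ and not merely inside the hull interval $I_{1^n1}$, and that no point of $\mathcal{C}(\textbf{a})$ outside the corner copy $I_{0^{n+1}}$ gets pushed into $[0,1]$ — but this is controlled precisely because $y_n$ is chosen so that translation by $y_n$ maps the right end of $I_{0^{n+1}}$ to the point $1$. Invoking \cref{Lbrick} to identify the translated corner copy with $\mathcal{C}(\textbf{a}) \cap I_{1^n1}$ is what makes this clean; the rest is the elementary length identity $l(I_{1^n1}) = r(P_{1^n}) = 1 - \prod_{k=1}^{n+1}\frac{1-a_k}{2}$, which follows directly from the construction in \cref{FCC}.
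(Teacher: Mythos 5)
Your proposal is correct and follows essentially the same route as the paper's own proof: both reduce to checking $(\mathcal{C}(\textbf{a})+y)\cap[0,1]\subseteq\mathcal{C}(\textbf{a})$ via \cref{LS}, and both use \cref{Lbrick} to see that the shift by $r(P_{1\ldots1})=l(I_{1\ldots1})$ carries the leftmost corner copy onto the rightmost one while expelling everything else from $[0,1]$. Your version merely spells out the endpoint bookkeeping ($c+y_n\le 1\iff c\in I_{0^{n+1}}$) and the symmetry reduction in more detail than the paper does.
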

\begin{proof}
It is trivial that $S$ always contains $\{0,\pm1\}$. By \cref{Lbrick}, $\mathcal{C}(\textbf{a})\cap I_{\underbrace{{\scriptstyle00\ldots0}}_{n}}$ and $\mathcal{C}(\textbf{a})\cap I_{\underbrace{{\scriptstyle11\ldots1}}_{n}}$ are identical up to a shift. In particular, \begin{equation*}
(\mathcal{C}(\textbf{a})\cap I_{\underbrace{\scriptstyle00\ldots0}_{n}})+l(I_{\underbrace{\scriptstyle11\ldots1}_{n}})=\mathcal{C}(\textbf{a})\cap I_{\underbrace{\scriptstyle11\ldots1}_{n}}\mbox{,}
\end{equation*}
and equivalently
\begin{equation*}
(\mathcal{C}(\textbf{a})\cap I_{\underbrace{\scriptstyle11\ldots1}_{n}})-l(I_{\underbrace{\scriptstyle11\ldots1}_{n}})=\mathcal{C}(\textbf{a})\cap I_{\underbrace{\scriptstyle00\ldots0}_{n}}\mbox{.}
\end{equation*}
Since $\mathcal{C}(\textbf{a})\cap I_{\underbrace{\scriptstyle00\ldots0}_{n}}$ and $\mathcal{C}(\textbf{a})\cap I_{\underbrace{\scriptstyle11\ldots1}_{n}}$ are located at the two far ends of $\mathcal{C}(\textbf{a})$, we can interpret this as $(\mathcal{C}(\textbf{a})\pm l(I_{\underbrace{\scriptstyle11\ldots1}_{n}}))\cap[0,1]\subseteq\mathcal{C}(\textbf{a})$. See \cref{FC2FE}.
\begin{figure}
\begin{tikzpicture}[scale=10]
\draw[](0,0)--(1,0)
node[pos=0,anchor=east]{$[0,1]=$}
node[pos=2/10,anchor=south]{$P_0$}
node[pos=5/10,anchor=south]{$P$}
node[pos=8/10,anchor=south]{$P_{1}$}
node[pos=1,anchor=south west]{$\mathcal{C}(\textbf{a})$};

\node[scale=2]at(0/10,0){$\textbf{[}$};
\node[scale=2]at(1.5/10,0){$\textbf{]}$};
\node[scale=2]at(8.5/10,0){$\textbf{[}$};
\node[scale=2]at(10/10,0){$\textbf{]}$};

\node[anchor=north]at(0/10,-0.03){$0$};
\node[anchor=north]at(10/10,-0.03){$1$};

\draw[line width=4pt,dashed,->](0/10,0)--(1.5/10,0);
\draw[line width=4pt,dashed](2.5/10,0)--(4/10,0);
\draw[line width=4pt,dashed](6/10,0)--(7.5/10,0);
\draw[line width=4pt,dashed,->](8.5/10,0)--(10/10,0);
\node[anchor=south]at(0.75/10,0.02){$I_{00}$};
\node[anchor=south]at(3.25/10,0.02){$I_{01}$};
\node[anchor=south]at(6.75/10,0.02){$I_{10}$};
\node[anchor=south]at(9.25/10,0.02){$I_{11}$};

\draw[decorate,decoration=brace,thick](0/10,0+0.08)--(8.5/10,0+0.08)node[pos=1/2,anchor=south]{$l(I_{11})=r(P_1)$};
\draw[->](0/10,0+0.08)--(0/10,0+0.04);
\draw[->](8.5/10,0+0.08)--(8.5/10,0+0.04);
\end{tikzpicture}
\caption{Illustration of the key idea in the proof of \cref{TALC}. Since the two far ends, $\mathcal{C}(\textbf{a})\cap I_{00}$ and $\mathcal{C}(\textbf{a})\cap I_{11}$, are identical upto a shift, they can be shifted into each other by $\pm r(P_{1})$. This operation can be applied to shift the entire set and then trim it back within the interval $[0,1]$. In particular, $(\mathcal{C}(\textbf{a})\pm r(P_{1}))\cap[0,1]\subseteq\mathcal{C}(\textbf{a})$.}\label{FC2FE}
\end{figure}

Notice $l(I_{\underbrace{\scriptstyle11\ldots1}_{n}})=r(P_{\underbrace{\scriptstyle1\ldots1}_{n-1}})$ and let $Y\coloneqq\{0,\pm r(P),\pm r(P_1),\pm r(P_{11}),\ldots,\pm1\}$. Clearly, $(\mathcal{C}(\textbf{a})+Y)\cap[0,1]\subseteq\mathcal{C}(\textbf{a})$. By \cref{LS}, we have $Y\subseteq S$.
\end{proof}

\begin{theorem}\label{T13}
For every $\textbf{\em a}\in[\frac{1}{3},1)^{\mathbb{N}}$, the set $S\coloneqq[-1,1]\setminus(\mathcal{C}(\textbf{\em a})^c-\mathcal{C}(\textbf{\em a}))$ is fully determined. In particular,
\begin{equation*}
S=\{0,\pm r(P),\pm r(P_1),\pm r(P_{11}),\ldots,\pm1\}\mbox{.}
\end{equation*}
\end{theorem}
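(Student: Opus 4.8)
The goal is to establish the reverse of what \cref{TALC} already gives, namely $[-1,1]\setminus\{0,\pm r(P),\pm r(P_1),\pm r(P_{11}),\dots,\pm1\}\subseteq\mathcal{C}(\textbf{a})^c-\mathcal{C}(\textbf{a})$; combined with \cref{TALC} this yields the stated equality. Write $P_{1^n}$ for the gap $P_{1\cdots1}$ with $n$ ones (so $P_{1^0}=P$), and observe that $r(P_{1^n})=l(I_{1^{n+1}})$ strictly increases to $1$, since $1-r(P_{1^n})=|I_{1^{n+1}}|=\prod_{k=1}^{n+1}\frac{1-a_k}{2}\to0$. Each central Cantor set $\mathcal{C}(\textbf{a})$ is symmetric about $\tfrac12$, so $\mathcal{C}(\textbf{a})^c-\mathcal{C}(\textbf{a})$ is symmetric about $0$ (from $y=x-c$ one gets $-y=(1-x)-(1-c)$, with $1-x\in\mathcal{C}(\textbf{a})^c$, $1-c\in\mathcal{C}(\textbf{a})$), and $0,\pm1\notin\mathcal{C}(\textbf{a})^c-\mathcal{C}(\textbf{a})$ by the elementary observations from the introduction, which apply to any Cantor set in $[0,1]$. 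Hence it suffices to show
\[(0,1)\setminus\{r(P_{1^n}):n\ge0\}=(0,r(P))\cup\bigcup_{n\ge0}\big(r(P_{1^n}),r(P_{1^{n+1}})\big)\ \subseteq\ \mathcal{C}(\textbf{a})^c-\mathcal{C}(\textbf{a}).\]

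Two ingredients drive the argument. \textbf{Big-gap step.} For any $\textbf{b}\in[\tfrac13,1)^{\mathbb{N}}$, the gap $P=P(\textbf{b})$ is strictly longer than every other gap of $\mathcal{C}(\textbf{b})$: a gap $P_{\textbf{t}}$ with $|\textbf{t}|=m\ge1$ has length $b_{m+1}\prod_{k=1}^m\frac{1-b_k}{2}<\prod_{k=1}^m\frac{1-b_k}{2}\le\big(\tfrac13\big)^m\le\tfrac13\le b_1=|P|$. In particular $P$ is strictly longer than every gap of $\mathcal{C}(\textbf{b})\cap I_0$, so \cref{LLG}(i) applied with $G=P$ and the endpoints $0,l(P)\in\mathcal{C}(\textbf{b})$ gives $(0,r(P))=(l(P)-l(P),r(P)-0)\subseteq\mathcal{C}(\textbf{b})^c-\mathcal{C}(\textbf{b})$; this is the only place the hypothesis $\textbf{a}\in[\tfrac13,1)^{\mathbb{N}}$ is used. \textbf{Self-similarity step.} Put $\textbf{b}'=(b_2,b_3,\dots)$. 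The affine maps $\varphi_0(t)=\tfrac{1-b_1}{2}t$ and $\varphi_1(t)=r(P)+\tfrac{1-b_1}{2}t$ send $[0,1]$ onto $I_0,I_1$ and $\mathcal{C}(\textbf{b}')$ onto $\mathcal{C}(\textbf{b})\cap I_0,\mathcal{C}(\textbf{b})\cap I_1$, respectively. Hence, if $z=x-c$ with $x\in[0,1]\setminus\mathcal{C}(\textbf{b}')$ and $c\in\mathcal{C}(\textbf{b}')$, then $\varphi_1(x)\in I_1\setminus\mathcal{C}(\textbf{b})\subseteq\mathcal{C}(\textbf{b})^c$ and $\varphi_0(c)\in\mathcal{C}(\textbf{b})$, so $r(P)+\tfrac{1-b_1}{2}z=\varphi_1(x)-\varphi_0(c)\in\mathcal{C}(\textbf{b})^c-\mathcal{C}(\textbf{b})$; moreover $\varphi_1$ carries $P_{1^{k-1}}(\textbf{b}')$ to $P_{1^k}(\textbf{b})$, so $r(P_{1^k}(\textbf{b}))=r(P)+\tfrac{1-b_1}{2}\,r(P_{1^{k-1}}(\textbf{b}'))$ for $k\ge1$.

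With these in hand, I would prove by induction on $n\ge0$ that for every $\textbf{b}\in[\tfrac13,1)^{\mathbb{N}}$,
\[(0,r(P_{1^n}(\textbf{b})))\setminus\{r(P_{1^k}(\textbf{b})):0\le k<n\}\ \subseteq\ \mathcal{C}(\textbf{b})^c-\mathcal{C}(\textbf{b}).\]
The case $n=0$ is the big-gap step. For the inductive step one splits the left side into $(0,r(P))$, handled by the big-gap step, and $\big(r(P),r(P_{1^{n+1}}(\textbf{b}))\big)\setminus\{r(P_{1^k}(\textbf{b})):1\le k\le n\}$, which by the relations above equals $\varphi_1\big((0,r(P_{1^n}(\textbf{b}')))\setminus\{r(P_{1^j}(\textbf{b}')):0\le j<n\}\big)$; the inductive hypothesis for $\textbf{b}'$ followed by the self-similarity step puts this inside $\mathcal{C}(\textbf{b})^c-\mathcal{C}(\textbf{b})$. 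Taking the union over $n$ and using $r(P_{1^n}(\textbf{a}))\to1$ gives $(0,1)\setminus\{r(P_{1^n}(\textbf{a})):n\ge0\}\subseteq\mathcal{C}(\textbf{a})^c-\mathcal{C}(\textbf{a})$; symmetry about $0$ then upgrades this to the inclusion displayed above, finishing the proof.

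The main (and essentially the only nontrivial) obstacle is the big-gap step: recognizing that $\textbf{a}\in[\tfrac13,1)^{\mathbb{N}}$ is exactly the condition forcing $P$ to be \emph{strictly} the longest gap of $\mathcal{C}(\textbf{a})$, and likewise of every rescaled piece $\mathcal{C}(\textbf{b})$ that appears in the induction, which is what licenses \cref{LLG}(i). The self-similarity step uses no hypothesis on $\textbf{a}$ whatsoever, and the induction is pure bookkeeping with the numbers $r(P_{1^n})$ and the maps $\varphi_0,\varphi_1$.
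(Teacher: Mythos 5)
Your proof is correct, and it rests on the same two pillars as the paper's: the decomposition of $(0,1)$ into the consecutive intervals $(r(P_{1^n}),r(P_{1^{n+1}}))$ (writing $P_{1^n}$ for the gap indexed by a string of $n$ ones) together with \cref{LLG}(i), with the hypothesis $\textbf{a}\in[\frac13,1)^{\mathbb{N}}$ entering exactly through the estimate $b_{m+1}\prod_{k=1}^{m}\frac{1-b_k}{2}<\left(\frac13\right)^{m}\le b_1$. The difference is organizational rather than substantive. The paper obtains each interval $(r(P_{1^{n-1}}),r(P_{1^{n}}))$ by a single direct application of \cref{LLG}(i) at depth $n$, taking $G=P_{1^n}$ and the window $[a,b]=I_{0^{n+1}}=[0,|I_{0^{n+1}}|]$, checking that $P_{1^n}$ is strictly longer than every gap of $\mathcal{C}(\textbf{a})$ in that window, and then converting the endpoints via the length identity $|I_{0^{n+1}}|=|I_{1^{n}0}|=l(P_{1^n})-r(P_{1^{n-1}})$; the negative half is handled by the mirror-image argument. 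You instead invoke \cref{LLG}(i) only for the top-level gap of each renormalized set $\mathcal{C}(\textbf{b})$ and transport the conclusion to deeper scales with the explicit affine maps $\varphi_0,\varphi_1$ and an induction on the tail sequence $\textbf{b}'$; your self-similarity step is precisely the fact the paper uses implicitly when it equates the lengths of $I_{0^{n+1}}$ and $I_{1^{n}0}$. The paper's version is shorter and never leaves the original Cantor set; yours isolates the single place where the threshold $\frac13$ is needed, makes the role of self-similarity explicit, and replaces the mirror-image argument for $(-1,0)$ with the clean observation that $\mathcal{C}(\textbf{a})^c-\mathcal{C}(\textbf{a})$ is symmetric about $0$ because $\mathcal{C}(\textbf{a})$ is symmetric about $\frac12$. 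Both arguments are complete; I see no gap in yours.
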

\begin{proof}
By \cref{TALC}, we already have $S\supseteq\{0,\pm r(P),\pm r(P_1),\pm r(P_{11}),\ldots,\pm1\}$. To show that they are equal, it suffices to prove that $\mathcal{C}(\textbf{a})^c-\mathcal{C}(\textbf{a})$ contains all the following open intervals,
\begin{equation*}
\ldots\mbox{, }(-r(P_1),-r(P))\mbox{, }(-r(P),0)\mbox{, }(0,r(P))\mbox{, }(r(P),r(P_1))\mbox{,}\ldots\mbox{,}
\end{equation*}
which cover all the gaps within $\{-1,\ldots,-r(P_1),-r(P),0,r(P),r(P_1),\ldots,1\}$.

Indeed, if $\textbf{a}\subseteq[\frac{1}{3},1)^\mathbb{N}$, the assumption implies that
\begin{enumerate}[]
\item $P_{\underbrace{\scriptstyle11\ldots1}_{n}}$ is strictly longer than every gap of $\mathcal{C}(\textbf{a})\cap I_{\underbrace{\scriptstyle00\ldots0}_{n+1}}=\mathcal{C}(\textbf{a})\cap[0,|I_{\underbrace{\scriptstyle00\ldots0}_{n+1}}|]$, and
\item $P_{\underbrace{\scriptstyle00\ldots0}_{n}}$ is strictly longer than every gap of $\mathcal{C}(\textbf{a})\cap I_{\underbrace{\scriptstyle11\ldots1}_{n+1}}=\mathcal{C}(\textbf{a})\cap[1-|I_{\underbrace{\scriptstyle11\ldots1}_{n+1}}|,1]$.
\end{enumerate}
By (i) of \cref{LLG} and a visual assist in \cref{FP-I},
\begin{figure}
\begin{tikzpicture}[scale=10]
\draw[](0,0)--(1,0)
node[pos=0,anchor=east]{$[0,1]=$}
node[pos=0,anchor=north]{$0$}
node[pos=1.6/10,anchor=south]{$P_{\underbrace{\scriptstyle00\ldots0}_{n}}$}
node[pos=6/10,anchor=south]{$P_{\underbrace{\scriptstyle1\ldots1}_{n-1}}$}
node[pos=8.4/10,anchor=south]{$P_{\underbrace{\scriptstyle11\ldots1}_{n}}$}
node[pos=1,anchor=north]{$1$}
node[pos=1,anchor=south west]{$\mathcal{C}(\textbf{a})$};
\draw[line width=4pt,dashed](0/10,0)--(1/10,0);
\draw[line width=4pt,dashed](2.2/10,0)--(3.2/10,0);
\draw[line width=4pt,dashed](6.8/10,0)--(7.8/10,0);
\draw[line width=4pt,dashed](9/10,0)--(10/10,0);
\node[anchor=south]at(0.5/10,0.02){$I_{\underbrace{\scriptstyle00\ldots00}_{n+1}}$};
\node[anchor=south]at(7.3/10,0.02){$I_{\underbrace{\scriptstyle11\ldots10}_{n+1}}$};

\filldraw[white](0.46,-0.01)rectangle(0.535,0.01);
\node[]at(1/2,0){$\cdots$};
\node[rotate=90,scale=1.5]at(0.46,0){$\sim$};
\node[rotate=90,scale=1.5]at(0.54,0){$\sim$};

\draw[](1/10,-0.03)--(1/10,+0.03);
\draw[](6.8/10,-0.03)--(6.8/10,+0.03);
\draw[](7.8/10,-0.03)--(7.8/10,+0.03);

\draw[line width=1pt,->](1.3/10,-0.05)--(1/10+0.005,-0.005);
\draw[line width=1pt,->](6.5/10,-0.05)--(6.8/10-0.005,-0.005);
\draw[line width=1pt,->](8.1/10,-0.05)--(7.8/10+0.005,-0.005);

\node[anchor=north]at(1.6/10,-0.05){$l(P_{\underbrace{\scriptstyle00\ldots0}_{n}})$};
\node[anchor=north]at(6/10,-0.05){$r(P_{\underbrace{\scriptstyle1\ldots1}_{n-1}})$};
\node[anchor=north]at(8.4/10,-0.05){$l(P_{\underbrace{\scriptstyle11\ldots1}_{n}})$};
\end{tikzpicture}
\caption{Illustration of a computation in the proof of \cref{T13}. Note that $I_{00\ldots00}$ and $I_{11\ldots10}$ have the same length. Therefore, $|I_{\protect\underbrace{\scriptstyle00\ldots00}_{n+1}}|=|I_{\protect\underbrace{\scriptstyle11\ldots10}_{n+1}}|=
l(P_{\protect\underbrace{\scriptstyle11\ldots1}_{n}})-r(P_{\protect\underbrace{\scriptstyle1\ldots1}_{n-1}})$.}\label{FP-I}
\end{figure}
we have
\begin{align*}
(l(P_{\underbrace{\scriptstyle11\ldots1}_{n}})-|I_{\underbrace{\scriptstyle00\ldots00}_{n+1}}|,r(P_{\underbrace{\scriptstyle11\ldots1}_{n}})-0)&=(l(P_{\underbrace{\scriptstyle11\ldots1}_{n}})-|I_{\underbrace{\scriptstyle11\ldots10}_{n+1}}|,r(P_{\underbrace{\scriptstyle11\ldots1}_{n}}))\\
&=(l(P_{\underbrace{\scriptstyle11\ldots1}_{n}})-
(l(P_{\underbrace{\scriptstyle11\ldots1}_{n}})-r(P_{\underbrace{\scriptstyle1\ldots1}_{n-1}})),r(P_{\underbrace{\scriptstyle11\ldots1}_{n}}))\\
&=(r(P_{\underbrace{\scriptstyle1\ldots1}_{n-1}}),r(P_{\underbrace{\scriptstyle11\ldots1}_{n}}))\subseteq\mathcal{C}(\textbf{a})^c-\mathcal{C}(\textbf{a})\mbox{.}
\end{align*}
Symmetrically, $(-r(P_{\underbrace{\scriptstyle11\ldots1}_{n}}),-r(P_{\underbrace{\scriptstyle11\ldots1}_{n-1}}))\subseteq\mathcal{C}(\textbf{a})^c-\mathcal{C}(\textbf{a})$ can be obtained in the same way. Therefore, we conclude that $S=\{0,\pm r(P),\pm r(P_1),\pm r(P_{11}),\ldots,\pm1\}$.
\end{proof}

Notice that the classical Cantor ternary set $\mathfrak{C}\subseteq[0,1]$ is actually a central Cantor set $\mathcal{C}(\textbf{a})$, where $\textbf{a}$ is a constant sequence of $\frac{1}{3}$. The next corollary provides a full answer to \cref{PC}.

\begin{corollary}\label{CCP}
Let $\mathfrak{C}\subseteq[0,1]$ denote the classical Cantor ternary set. Then
\begin{equation*}
\textstyle[-1,1]\setminus(\mathfrak{C}^c-\mathfrak{C})=\{0,\pm\frac{2}{3},\pm\frac{8}{9},\pm\frac{26}{27},\ldots,\pm1\}\mbox{.}
\end{equation*}
\end{corollary}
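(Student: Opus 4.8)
The plan is to recognize $\mathfrak{C}$ as a central Cantor set, apply \cref{T13}, and then simply read off the relevant endpoints. First I would note that the classical Cantor ternary set is $\mathfrak{C}=\mathcal{C}(\textbf{a})$ with $\textbf{a}=(\tfrac13,\tfrac13,\ldots)$ the constant sequence, as already remarked just before the corollary. Since $\tfrac13\in[\tfrac13,1)$, the sequence $\textbf{a}$ satisfies the hypothesis of \cref{T13}, and therefore
\begin{equation*}
[-1,1]\setminus(\mathfrak{C}^c-\mathfrak{C})=\{0,\pm r(P),\pm r(P_1),\pm r(P_{11}),\ldots,\pm1\}\mbox{.}
\end{equation*}
So the only remaining task is to evaluate the numbers $r(P_{\underbrace{\scriptstyle1\ldots1}_{n}})$ for the ternary construction.

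Next I would compute these endpoints. By a trivial induction on the stage of the construction, $I_{\underbrace{\scriptstyle1\ldots1}_{n}}$ is the rightmost subinterval surviving at step $n$, so it has length $3^{-n}$ and right endpoint $1$; that is, $I_{\underbrace{\scriptstyle1\ldots1}_{n}}=[\,1-3^{-n},1\,]$. The gap $P_{\underbrace{\scriptstyle1\ldots1}_{n}}$ removed from it at the next step is its open middle third, namely $\left(1-\tfrac23\cdot3^{-n},\,1-\tfrac13\cdot3^{-n}\right)$, whence
\begin{equation*}
r(P_{\underbrace{\scriptstyle1\ldots1}_{n}})=1-3^{-(n+1)}=\frac{3^{n+1}-1}{3^{n+1}}\mbox{,}\qquad n\geq0\mbox{,}
\end{equation*}
where $n=0$ refers to $P$ itself. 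Plugging in $n=0,1,2,\ldots$ gives $\tfrac23,\tfrac89,\tfrac{26}{27},\ldots$, and this increasing sequence converges to $1$.

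Substituting these values into the description of $S$ furnished by \cref{T13} gives exactly
\begin{equation*}
[-1,1]\setminus(\mathfrak{C}^c-\mathfrak{C})=\textstyle\{0,\pm\tfrac23,\pm\tfrac89,\pm\tfrac{26}{27},\ldots,\pm1\}\mbox{,}
\end{equation*}
as claimed, with $\pm1$ occurring as the (one-sided) limit of the $\pm\!\left(1-3^{-n}\right)$, which is precisely what makes the displayed set closed. I do not anticipate any real obstacle: the content is entirely contained in \cref{T13}, and the only point requiring a little care is bookkeeping the index so that $r(P)=\tfrac23$ sits over $3^{1}$ in the denominator, together with the observation that the sequence $1-3^{-n}$ accumulates only at $1$.
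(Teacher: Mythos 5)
Your proposal is correct and follows exactly the route the paper intends: the corollary is stated immediately after the observation that $\mathfrak{C}=\mathcal{C}(\textbf{a})$ for the constant sequence $\textbf{a}=(\tfrac13,\tfrac13,\ldots)$, so it is meant to be read off from \cref{T13} together with the computation $r(P_{\underbrace{\scriptstyle1\ldots1}_{n}})=1-3^{-(n+1)}$, which you carry out correctly. Nothing is missing.
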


By \cref{TALC}, we know that $[-1,1]\setminus(\mathcal{C}(\textbf{a})^c-\mathcal{C}(\textbf{a}))$ is at least countably infinite. In the next theorem, we show that it is also at most countably infinite.

\begin{theorem}\label{TAMC}
For every $\textbf{\em a}\in(0,1)^\mathbb{N}$, the set $S\coloneqq[-1,1]\setminus(\mathcal{C}(\textbf{\em a})^c-\mathcal{C}(\textbf{\em a}))$ is at most countably infinite.
\end{theorem}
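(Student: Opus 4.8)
The plan is to exploit the self-similar structure of a central Cantor set together with \cref{LS}. Since $\mathcal{C}(\textbf{a})$ is symmetric about $\tfrac12$, the set $\mathcal{C}(\textbf{a})^c-\mathcal{C}(\textbf{a})$, hence $S$, is symmetric about $0$; as $\{0,\pm1\}\subseteq S$ trivially, it suffices to prove that $S\cap(0,1)$ is countable. Throughout write $\sigma\textbf{a}\coloneqq(a_2,a_3,\dots)$ and $S(\textbf{a})\coloneqq[-1,1]\setminus(\mathcal{C}(\textbf{a})^c-\mathcal{C}(\textbf{a}))$, and recall that $\mathcal{C}(\textbf{a})\cap I_0$ and $\mathcal{C}(\textbf{a})\cap I_1$ are affine copies of $\mathcal{C}(\sigma\textbf{a})$ scaled by $\lambda\coloneqq|I_0|=\tfrac{1-a_1}{2}=l(P)$, with $r(P)=1-\lambda$. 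By \cref{LS}, $s\in S(\textbf{a})$ iff $(\mathcal{C}(\textbf{a})+s)\cap[0,1]\subseteq\mathcal{C}(\textbf{a})$; applying this to the point $0\in\mathcal{C}(\textbf{a})$ gives $S(\textbf{a})\cap[0,1]\subseteq\mathcal{C}(\textbf{a})$, so $S(\textbf{a})\cap(l(P),r(P))=\emptyset$.

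The heart of the argument is a reduction relating $S(\textbf{a})$ to $S(\sigma\textbf{a})$. Fix $s\in S(\textbf{a})\cap(0,1)$; by the previous paragraph $s\notin(l(P),r(P))$, leaving two cases.
\begin{enumerate}[(1)]
\item $s\in(r(P),1)$. Then $r(P)+s>2r(P)=1+a_1>1$, so $(\mathcal{C}(\textbf{a})\cap I_1)+s$ lies entirely above $1$ and contributes nothing, while every point of $(\mathcal{C}(\textbf{a})\cap I_0)+s$ that is $\le 1$ in fact lies in $(r(P),1]\subseteq I_1$ and so must belong to $\mathcal{C}(\textbf{a})\cap I_1$. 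Rescaling by $\lambda^{-1}$ turns this into $(\mathcal{C}(\sigma\textbf{a})+\tfrac{s-r(P)}{\lambda})\cap[0,1]\subseteq\mathcal{C}(\sigma\textbf{a})$, i.e., $\tfrac{s-r(P)}{\lambda}\in S(\sigma\textbf{a})\cap(0,1)$. Thus the affine homeomorphism $\phi_{\textbf{a}}\colon(r(P),1)\to(0,1)$, $\phi_{\textbf{a}}(s)=\tfrac{s-r(P)}{\lambda}$, carries $S(\textbf{a})\cap(r(P),1)$ into $S(\sigma\textbf{a})\cap(0,1)$.
\item $s\in(0,l(P))$. Here $(\mathcal{C}(\textbf{a})\cap I_0)+s\subseteq(0,1)$, so $(\mathcal{C}(\textbf{a})+s)\cap[0,1]\subseteq\mathcal{C}(\textbf{a})$ forces $(\mathcal{C}(\textbf{a})\cap I_0)+s\subseteq\mathcal{C}(\textbf{a})$. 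Rescaling by $\lambda^{-1}$, and using $\lambda^{-1}\mathcal{C}(\textbf{a})=\mathcal{C}(\sigma\textbf{a})\cup(\rho+\mathcal{C}(\sigma\textbf{a}))$ with $\rho\coloneqq\tfrac{r(P)}{\lambda}=1+L$ and $L\coloneqq\tfrac{2a_1}{1-a_1}$, this requirement, with $s_1\coloneqq\tfrac{s}{\lambda}$, splits according to where a point lands relative to the hole $(1,\rho)$ between the two copies: $\mathcal{C}(\sigma\textbf{a})\cap(1-s_1,\rho-s_1)=\emptyset$, and $s_1\in S(\sigma\textbf{a})$, and $\rho-s_1\in S(\sigma\textbf{a})$. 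Since $1\in\mathcal{C}(\sigma\textbf{a})$ cannot lie in $(1-s_1,\rho-s_1)$ we get $s_1\ge L$, and the accumulation of $\mathcal{C}(\sigma\textbf{a})$ at $1$ rules out $s_1=L$; hence $\rho-s_1\in(0,1)\cap\mathcal{C}(\sigma\textbf{a})$ is the right endpoint of an interval of length $L$ disjoint from $\mathcal{C}(\sigma\textbf{a})$, so $\rho-s_1=r(G)$ for some gap $G$ of $\mathcal{C}(\sigma\textbf{a})$ with $|G|\ge L$. Because gap lengths decay to $0$ along the levels, $\mathcal{C}(\sigma\textbf{a})$ has only finitely many gaps of length $\ge L$, and therefore $S(\textbf{a})\cap(0,l(P))$ is finite.
\end{enumerate}

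Combining (1), (2) and the two extra points $l(P),r(P)$ yields $S(\textbf{a})\cap(0,1)\subseteq F_{\textbf{a}}\cup\phi_{\textbf{a}}^{-1}\big(S(\sigma\textbf{a})\cap(0,1)\big)$ with $F_{\textbf{a}}$ finite. I would then iterate along $\textbf{a},\sigma\textbf{a},\sigma^2\textbf{a},\dots$ and compose: for every $n$,
\begin{equation*}
S(\textbf{a})\cap(0,1)\ \subseteq\ \left(\bigcup_{k=0}^{n-1}\Phi_k\big(F_{\sigma^k\textbf{a}}\big)\right)\cup\Phi_n\big((0,1)\big),\qquad \Phi_k\coloneqq\phi_{\textbf{a}}^{-1}\circ\phi_{\sigma\textbf{a}}^{-1}\circ\cdots\circ\phi_{\sigma^{k-1}\textbf{a}}^{-1}.
\end{equation*}
A direct computation gives $\Phi_k(t)=(1-\beta_k)+\beta_k t$ with $\beta_k=\prod_{j=1}^{k}\tfrac{1-a_j}{2}\to0$, so $\Phi_n\big((0,1)\big)=(1-\beta_n,1)$, and any fixed $s\in(0,1)$ lies outside $(1-\beta_n,1)$ once $\beta_n<1-s$. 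Hence $S(\textbf{a})\cap(0,1)\subseteq\bigcup_{k\ge0}\Phi_k(F_{\sigma^k\textbf{a}})$, a countable union of finite sets; by symmetry $S(\textbf{a})$ is countable, and being infinite by \cref{TALC} it is countably infinite, which is the claim. The one nontrivial step is (2): showing that the right endpoint of the forbidden interval must be a gap endpoint, which converts an a priori interval of candidate values of $s$ into a finite list; the rest is bookkeeping with affine maps whose contraction ratios $\beta_n$ vanish.
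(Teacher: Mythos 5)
Your proof is correct, but it follows a genuinely different route from the paper's. The paper argues by constructing an increasing chain of gaps marching toward $1$: it takes $G_1$ to be the rightmost longest gap of $\mathcal{C}(\textbf{a})$ and, inductively, $G_{n+1}$ the rightmost longest gap of $\mathcal{C}(\textbf{a})\cap[r(G_n),1]$; using \cref{Lbrick} to transport the ``longest gap'' hypothesis from $[r(G_n),l(G_{n+1})]$ to $[0,l(G_{n+1})-r(G_n)]$, part (ii) of \cref{LLG} yields $(r(G_n),r(G_{n+1}))\setminus F_{n+1}\subseteq\mathcal{C}(\textbf{a})^c-\mathcal{C}(\textbf{a})$ with $F_{n+1}$ finite, and $r(G_n)\to1$ finishes the job. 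You instead run a renormalization relating $S(\textbf{a})$ to $S(\sigma\textbf{a})$, powered entirely by \cref{LS}: the observation $S\cap[0,1]\subseteq\mathcal{C}(\textbf{a})$ empties the middle gap, the part of $S$ above $P$ is carried by an affine homeomorphism into $S(\sigma\textbf{a})\cap(0,1)$, and the part below $P$ is finite because each such $s$ forces a gap-free interval of length $L=\frac{2a_1}{1-a_1}$ whose right endpoint lies in $\mathcal{C}(\sigma\textbf{a})$, pinning $s$ to one value per gap of length at least $L$ --- of which there are only finitely many, since level-$n$ gap lengths are at most $2^{-(n-1)}$. Iterating with $\beta_n\to0$ then gives countability. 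Both arguments exploit self-similarity, but yours bypasses \cref{LLG} entirely and produces an explicit recursive superset of $S$, from which \cref{T13} is nearly immediate (for $a_n\ge\frac13$ one gets $L\ge1$, so the finite exceptional sets below the gaps are empty); the paper's version is shorter because \cref{LLG}(ii) absorbs all the finiteness bookkeeping, and it does not need the central symmetry of $\mathcal{C}(\textbf{a})$ that you use to reduce to $S\cap(0,1)$. One small ordering point in your case (2): \cref{LS} applies only to $Y\subseteq[-1,1]$, so the assertion $\rho-s_1\in S(\sigma\textbf{a})$ should come after you have shown $s_1>L$ (hence $\rho-s_1<1$); since you do establish $s_1>L$ from the hole condition together with the accumulation of $\mathcal{C}(\sigma\textbf{a})$ at $1$, this is a reordering, not a gap.
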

\begin{proof}
We will only show that $S\cap[0,1]$ is at most countably infinite. The argument for $S\cap[-1,0]$ follows symmetrically.

Let $G_1$ be the rightmost longest gap of $\mathcal{C}(\textbf{a})$. Since $G_1$ is longer than or equal to every gap of $\mathcal{C}(\textbf{a})\cap[0,l(G_1)]$, we have
\begin{center}
$(l(G_1)-l(G_1),r(G_1)-0)=(0,r(G_1))$
\end{center}
and by (ii) of \cref{LLG}, $(0,r(G_1)) \setminus F_1\subseteq\mathcal{C}(\textbf{a})^c-\mathcal{C}(\textbf{a})$ for some finite set $F_1$.

Now, let $G_2$ be the rightmost longest gap of $\mathcal{C}(\textbf{a})\cap[r(G_1),1]$. Observe that the interval $[r(G_1),1]$ is equal to an interval $I_{\underbrace{\scriptstyle11\ldots1}_{k}}$ for some $k\in \mathbb{N}$, and so, by \cref{Lbrick}, the sets $\mathcal{C}(\textbf{a})\cap[r(G_1),1]$ and $\mathcal{C}(\textbf{a})\cap[0,1-r(G_1)]=\mathcal{C}(\textbf{a})\cap I_{\underbrace{\scriptstyle00\ldots0}_{k}}$ are identical up to a shift. Moreover, $G_2$ is longer than or equal to every gap of $\mathcal{C}(\textbf{a})\cap[r(G_1),l(G_2)]\subseteq[r(G_1),1]$, and thus also $G_2$ is longer than or equal to every gap of $\mathcal{C}(\textbf{a})\cap[0,l(G_2)-r(G_1)]\subseteq[0,1-r(G_1)]$. See \cref{FSS}.
\begin{figure}
\begin{tikzpicture}[scale=10]
\draw[](0,0)--(1,0)node[pos=0,anchor=east]{$I=[0,1]=$}
node[pos=5/10,anchor=south]{$G_1$}
node[pos=9.1/10,anchor=south]{$G_2$};
\node[anchor=south west]at(9.8/10,0.03){$\mathcal{C}(\textbf{a})$};
\draw[line width=4pt,dashed](0/10,0)--(0.6/10,0);
\draw[line width=4pt,dashed](1.2/10,0)--(1.8/10,0);
\draw[line width=4pt,dashed](2.4/10,0)--(3.0/10,0);
\draw[line width=4pt,dashed](3.6/10,0)--(4.2/10,0);
\draw[line width=4pt,dashed](5.8/10,0)--(6.4/10,0);
\draw[line width=4pt,dashed](7.0/10,0)--(7.6/10,0);
\draw[line width=4pt,dashed](8.2/10,0)--(8.8/10,0);
\draw[line width=4pt,dashed](9.4/10,0)--(10/10,0);

\node[scale=2]at(0/10,0){$\textbf{[}$};
\node[scale=2]at(3/10,0){$\textbf{]}$};
\node[scale=2]at(4.2/10,0){$\textbf{]}$};
\node[scale=2]at(5.8/10,0){$\textbf{[}$};
\node[scale=2]at(8.8/10,0){$\textbf{]}$};
\node[scale=2]at(10/10,0){$\textbf{]}$};

\node[anchor=north]at(0/10,-0.03){$0$};
\node[anchor=north]at(2.5/10,-0.03){$l(G_2)-r(G_1)$};
\node[anchor=north]at(4.4/10,-0.03){$l(G_1)$};
\node[anchor=north]at(6/10,-0.03){$r(G_1)$};
\node[anchor=north]at(8.8/10,-0.03){$l(G_2)$};
\node[anchor=north]at(10/10,-0.03){$1$};

\draw[decorate,decoration=brace,thick](0/10,0+0.05)--(3/10,0+0.05)node[pos=1/2,anchor=south]{$l(G_2)-r(G_1)$};
\draw[decorate,decoration=brace,thick](5.8/10,0+0.05)--(8.8/10,0+0.05)node[pos=1/2,anchor=south]{$l(G_2)-r(G_1)$};
\end{tikzpicture}
\caption{Illustration of using the self-similarity of $\mathcal{C}(\textbf{a})$ in the proof of \cref{TAMC}. Since $[0,l(G_1)]$ and $[r(G_1),1]$ are identical up to a shift, their subintervals $[0,l(G_2)-r(G_1)]$ and $[r(G_1),l(G_2)]$ are also identical up to a shift.}\label{FSS}
\end{figure}
It again follows that
\begin{equation*}
(l(G_2)-(l(G_2)-r(G_1)),r(G_2)-0)=(r(G_1),r(G_2))
\end{equation*}
and by \cref{LLG} (ii), $(r(G_1),r(G_2)) \setminus F_2\subseteq\mathcal{C}(\textbf{a})^c-\mathcal{C}(\textbf{a})$ for some finite set $F_2$.

Generally, assume that we have defined the rightmost longest gaps $G_1,G_2,\dots,G_n$ for some $n\in\mathbb{N}$ with strictly decreasing length and such that $G_{i+1}$ lies on the right of $G_{i}$, and we have proved that the set $S\cap[0,r(G_n)]$ is finite. Let $G_{n+1}$ be the rightmost longest gap of $\mathcal{C}(\textbf{a})\cap[r(G_n),1]$. Then $G_{n+1}$ is longer than or equal to every gap of $\mathcal{C}(\textbf{a})\cap[r(G_n),l(G_{n+1})]$, and using \cref{Lbrick} similarly as earlier, we get also that $G_{n+1}$ is longer than or equal to every gap of $\mathcal{C}(\textbf{a})\cap[0,l(G_{n+1})-r(G_n)]$. Then
\begin{equation*}
(l(G_{n+1})-(l(G_{n+1})-r(G_n)),r(G_{n+1})-0)=(r(G_n),r(G_{n+1}))
\end{equation*}
and by \cref{LLG} (ii), $(r(G_n),r(G_{n+1})) \setminus F_{n+1}\subseteq\mathcal{C}(\textbf{a})^c-\mathcal{C}(\textbf{a})$ for some finite set $F_{n+1}$. This means that $S\cap[r(G_n),r(G_{n+1})]$ contains at most $\{r(G_n),r(G_{n+1})\}\cup F_{n+1}$, which is a finite set that keeps $S\cap[0,r(G_{n+1})]$ still finite. Since $\lim_{n\to\infty}r(G_n)=1$, we conclude inductively that $S\cap[0,1]$ is at most countably infinite.
\end{proof}

Concluding \cref{TALC,TAMC}, we state our main result in this section.

\begin{corollary}\label{CCI}
For every $\textbf{\em a}\in(0,1)^{\mathbb{N}}$, the set $S\coloneqq[-1,1]\setminus(\mathcal{C}(\textbf{\em a})^c-\mathcal{C}(\textbf{\em a}))$ is countably infinite.
\end{corollary}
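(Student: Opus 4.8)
The statement is an immediate synthesis of \cref{TALC,TAMC}, so the plan is simply to combine the two cardinality bounds they provide. First I would invoke \cref{TALC} to obtain $S\supseteq\{0,\pm r(P),\pm r(P_1),\pm r(P_{11}),\ldots,\pm1\}$ and observe that this exhibited subset is genuinely infinite: the points $r(P)<r(P_1)<r(P_{11})<\cdots$ form a strictly increasing sequence (each $I_{11\ldots1}$ with $n+1$ ones is a proper subinterval of the one with $n$ ones, so the right endpoint of the corresponding removed middle gap strictly increases) that is bounded above by $1$. Hence $S$ contains a countably infinite subset, i.e. $|S|\geq\aleph_0$. Then I would invoke \cref{TAMC}, which gives the reverse bound $|S|\leq\aleph_0$. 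A set that is simultaneously infinite and of cardinality at most $\aleph_0$ is countably infinite, which is exactly the assertion.

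The only thing worth a moment's care is reconciling the phrasings of the two inputs: \cref{TALC} says $S$ is ``at least countably infinite,'' which I read as ``$S$ contains an infinite (hence countably infinite) subset,'' and this is legitimate precisely because the displayed list is strictly monotone and therefore repetition-free; \cref{TAMC} says $S$ is ``at most countably infinite,'' i.e. $|S|\leq\aleph_0$. Together these pin down $|S|=\aleph_0$. I do not expect any genuine obstacle here: the substantive work has already been done in the two theorems — the construction of the explicit countable subset of $S$ in \cref{TALC} (using the self-similarity recorded in \cref{Lbrick} together with the transfer principle \cref{LS}), and the construction in \cref{TAMC} of the sequence of rightmost longest gaps $G_1,G_2,\ldots$ with $r(G_n)\to1$, which exhausts $[0,1]$ and expresses $S\cap[0,1]$ as a countable union of finite sets, with the symmetric argument handling $S\cap[-1,0]$.
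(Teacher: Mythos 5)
Your proposal is correct and matches the paper exactly: the corollary is stated there as an immediate consequence of combining \cref{TALC} (the explicit infinite subset $\{0,\pm r(P),\pm r(P_1),\ldots,\pm1\}$ of $S$) with \cref{TAMC} (the countability upper bound). Your extra remark that the points $r(P_{1\ldots1})$ are strictly increasing, so the exhibited subset is genuinely infinite, is a small but legitimate detail the paper leaves implicit.
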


Working on a central Cantor set $\mathcal{C}(\textbf{a})\subseteq[0,1]$, our arguments on the size of $[-1,1]\setminus(\mathcal{C}(\textbf{a})^c-\mathcal{C}(\textbf{a}))$ heavily rely on the nature of self-similarity of $\mathcal{C}(\textbf{a})$. This means that we can obtain interesting examples by slightly perturbing the self-similarity.

\section{How small can the set $[-1,1]\setminus(\mathcal{C}^c-\mathcal{C})$ be?}\label{S3}
Let $\mathcal{C}\subseteq[0,1]$ be a Cantor set. It is easy to see that $\mathcal{C}^c-\mathcal{C}$ is always as ``big'' as an open dense subset of $[-1,1]$, leaving the set $[-1,1]\setminus(\mathcal{C}^c-\mathcal{C})$ closed and nowhere dense. In the case where $\mathcal{C}$ is a central Cantor set, we have already shown that $\mathcal{C}^c-\mathcal{C}$ covers all $[-1,1]$ except for a countably infinite set. This raises a natural question:
\begin{center}
{\em Is there a Cantor set $\mathcal{C}\subseteq[0,1]$ such that $[-1,1]\setminus(\mathcal{C}^c-\mathcal{C})=\{-1,0,1\}$?}
\end{center}
In this section, we answer this question in the affirmative by constructing a Cantor $\mathcal{C}\subseteq[0,1]$ whose gaps are placed strategically. Here is the construction of such an example.

Let $c_1\in (0,1)$. We remove from the middle of the interval $[0,1]$ an open interval $G$ with length $c_1$. Denote by $I_0$ and $I_1$ the left and the right component of $[0,1]\setminus G$ respectively. Generally, we will always denote by $I_{s0}$ and $I_{s1}$ the left and the right component which will remain from the interval $I_s$ after removal of some gap $G_s$.

Let $c_2\in(0,c_1)$ be such that $c_2<\frac{1}{2}|I_0|=\frac{1}{2}|I_1|$, where $|I|$ denotes the length of the interval $I$. We remove from $I_0$ and $I_1$ open intervals $G_0$ and $G_1$, respectively, of length $c_2$ in such a way that $l(G_0)=c(I_0)$ and $r(G_1)=c(I_1)$, where $l(I)$, $r(I)$, $c(I)$ denotes the left, the right, the center point of $I$ respectively. In the next iteration, we choose a $c_3\in(0,c_2)$ such that $c_3<\frac{1}{2}|I_{00}|=\frac{1}{2}I_{11}$ and remove the open intervals $G_{00}$ of length $c_3$, $G_{01}$ of length at most $c_3$, $G_{10}$ of length at most $c_3$, $G_{11}$ of length $c_3$ from $I_{00}$, $I_{01}$, $I_{10}$, $I_{11}$, respectively, such that $l(G_{00})=c(I_{00})$, $c(G_{01})=c(I_{01})$, $c(G_{00})=c(I_{10})$, $r(G_{11})=c(I_{1})$. See \cref{FCPC}.
\begin{figure}
\begin{tikzpicture}[scale=10]
\draw[](0,0)--(1,0)node[pos=0,anchor=east]{$I=[0,1]=$};
\draw[line width=4pt](0/10,0)--(2/10,0);
\draw[line width=4pt](3.5/10,0)--(4/10,0);
\draw[line width=4pt](6/10,0)--(6.5/10,0);
\draw[line width=4pt](8/10,0)--(10/10,0);

\node[anchor=south]at(2/10,0.035){$I_{0}$};
\node[anchor=south]at(8/10,0.035){$I_{1}$};
\node[anchor=south]at(2.75/10,0){$G_0$};
\node[anchor=south]at(5/10,0){$G$};
\node[anchor=south]at(7.25/10,0){$G_1$};

\node[scale=2]at(0/10,0){$\textbf{[}$};
\node[scale=2]at(4/10,0){$\textbf{]}$};
\node[scale=2]at(6/10,0){$\textbf{[}$};
\node[scale=2]at(10/10,0){$\textbf{]}$};

\draw[decorate,decoration=brace,thick](3.5/10-0.01,0-0.02)--(2/10+0.01,0-0.02)node[pos=1/2,anchor=north]{$\displaystyle c_{2}$};
\draw[decorate,decoration=brace,thick](6/10-0.01,0-0.02)--(4/10+0.01,0-0.02)node[pos=1/2,anchor=north]{$\displaystyle c_{1}$};
\draw[decorate,decoration=brace,thick](8/10-0.01,0-0.02)--(6.5/10+0.01,0-0.02)node[pos=1/2,anchor=north]{$\displaystyle c_{2}$};

\draw[dashed,<-,line width=1pt](2/10,0-0.01)--(2/10,0-0.06)node[pos=1,anchor=north]{$l(G_{0})=c(I_{0})=\frac{1}{2}|I_{0}|$};
\draw[dashed,<-](8/10,0-0.01)--(8/10,0-0.06)node[pos=1,anchor=north]{$r(G_1)=c(I_1)$};

\draw[](0,0-0.25)--(1,0-0.25)node[pos=0,anchor=east]{$I_0=$};
\draw[line width=4pt](0*2.5/10,0-0.25)--(2.5/10,0-0.25);
\draw[line width=4pt](4/10,0-0.25)--(5/10,0-0.25);
\draw[line width=4pt](8.75/10,0-0.25)--(9/10,0-0.25);
\draw[line width=4pt](9.75/10,0-0.25)--(10/10,0-0.25);
\node[anchor=south]at(2.5/10,0.035-0.25){$I_{00}$};
\node[anchor=south]at(9.375/10,0.035-0.25){$I_{01}$};

\node[anchor=south]at(3.25/10,0-0.25){$G_{00}$};
\node[anchor=south]at(2.75/10*2.5,0-0.25){$G_0$};
\node[anchor=south]at(9.375/10,0-0.25){$G_{01}$};

\node[scale=1]at(0/10,0-0.25){$\textbf{[}$};
\node[scale=1]at(5/10,0-0.25){$\textbf{]}$};
\node[scale=1]at(8.75/10,0-0.25){$\textbf{[}$};
\node[scale=1]at(10/10,0-0.25){$\textbf{]}$};

\draw[dashed,<-](2.5/10,0-0.01-0.25)--(2.5/10,0-0.06-0.25)node[pos=1,anchor=north]{$l(G_{00})=c(I_{00})=\frac{1}{2}|I_{00}|$};
\draw[decorate,decoration=brace,thick](4/10-0.01,0-0.02-0.25)--(2.5/10+0.01,0-0.02-0.25)node[pos=1/2,anchor=north]{$\displaystyle c_{3}$};
\draw[decorate,decoration=brace,thick](8.75/10-0.01,0-0.02-0.25)--(5/10+0.01,0-0.02-0.25)node[pos=1/2,anchor=north]{$\displaystyle c_{2}$};

\draw[decorate,decoration=brace,thick](9.75/10-0.01,0-0.02-0.25)--(9/10+0.01,0-0.02-0.25)node[pos=1/2,anchor=north]{$|G_{01}|\leq c_3$};
\node[anchor=north]at(9/10,-0.07-0.25){$c(G_{01})=c(I_{01})$};

\draw[dashed,->,line width=1pt](0,0-0.02)--(0,-0.25+0.02);
\draw [dashed,->,line width=1pt](2/10,-0.1-0.02)to[out=-90,in=175](2.5/10,-0.15)to[out=0,in=180](4/10,-0.16)to[out=-5,in=90](5/10,-0.25+0.02);
\draw [dashed,->,line width=1pt](3.5/10,0-0.02)to[out=-85,in=175](5.8/10,-0.1-0.05)to[out=-5,in=175](7.7/10,-0.1-0.06)to[out=-5,in=90](8.75/10,-0.25+0.02);
\draw [dashed,->,line width=1pt](4/10,0-0.02)to[out=-85,in=175](6/10,-0.1-0.02)to[out=-5,in=175](8/10,-0.1-0.03)to[out=-5,in=90](1,-0.25+0.02);
\end{tikzpicture}
\caption{The construction of $\mathcal{C}\subseteq[0,1]$ starts with removing $G=(\frac{1-c_1}{2},\frac{1+c_1}{2})$ at the center of $I$. The two remaining intervals are denoted by $I_0$ on the left and $I_1$ on the right. In the next step, $l(G_{00})$, $c(I_{00})$ are aligned within $I_{00}$, and $I_{01}$, $c(G_{01})$ are aligned within $I_{01}$.}\label{FCPC}
\end{figure}

Assume that for some $n\in\mathbb{N}$, we have defined intervals $I_{s_1s_2\ldots s_n}$, where $s_1s_2\ldots s_n$ is a binary sequence of length $n$, along with a decreasing sequence of positives numbers $(c_i)_{i=1}^n$. Let $c_{n+1}\in (0,c_n)$ be such that
\begin{equation*}
c_{n+1}<\frac{1}{2}|I_{\underbrace{{\scriptstyle00\ldots0}}_{n}}|=\frac{1}{2}|I_{\underbrace{{\scriptstyle11\ldots1}}_{n}}|\mbox{.}
\end{equation*}
We remove from $I_{\underbrace{{\scriptstyle00\ldots0}}_{n}}$ and $I_{\underbrace{{\scriptstyle11\ldots1}}_{n}}$ open intervals $G_{\underbrace{{\scriptstyle00\ldots0}}_{n}}$ and $G_{\underbrace{{\scriptstyle11\ldots1}}_{n}}$, respectively, each of length $c_{n+1}$, in such a way that
\begin{center}
$l(G_{\underbrace{{\scriptstyle00\ldots0}}_{n}})=c(I_{\underbrace{{\scriptstyle00\ldots0}}_{n}})$ and $r(G_{\underbrace{{\scriptstyle11\ldots1}}_{n}})=c(I_{\underbrace{{\scriptstyle11\ldots1}}_{n}})$.
\end{center}
From the remaining intervals $I_{s_1s_2\ldots s_n}$, where the binary sequence $s_1s_2\ldots s_n$ is neither all zeros nor all ones, we also remove some open intervals $G_{s_1s_2\ldots s_n}$ of length at most $c_{n+1}$. Each such gap is concentric within its respective interval, that is $c(G_{s_1s_2\ldots s_n})=c(I_{s_1s_2\ldots s_n})$. Let
\begin{equation*}
\mathscr{C}\coloneqq\bigcap_{n\in\mathbb{N}}\bigcup_{\textbf{s}\in\{0,1\}^n}I_\textbf{s}\mbox{.}
\end{equation*}
We claim that $\mathscr{C}\subseteq[0,1]$ is a Cantor set. Indeed, it is clearly a perfect set containing both $0$ and $1$. Moreover, since all the gaps are placed near the centers of intervals, the lengths $|I_{s_1s_2\ldots s_n}|$ shrink geometrically to zero as $n\to\infty$. In particular, they follow the recursive inequality $\max\{|I_{s_1s_2\ldots s_n0}|,|I_{s_1s_2\ldots s_n1}|\}\leq\frac{1}{2}|I_{s_1s_2\ldots s_n}|$. Therefore, $\mathscr{C}$ is nowhere dense and hence qualifies as a Cantor set.

Before going into the next theorem, we would like to highlight that three key properties of the Cantor set $\mathscr{C}$. They are the founding stones of the next theorem.
\begin{itemize}
\item[•] $G_{\underbrace{{\scriptstyle00\ldots0}}_{n}}$ is always strictly longer than every gap of $\mathscr{C}\cap[0,l(G_{\underbrace{{\scriptstyle00\ldots0}}_{n}})]$.
\item[•] $G_{\underbrace{{\scriptstyle00\ldots0}}_{n}}$ and $G_{\underbrace{{\scriptstyle11\ldots1}}_{n}}$ always have the same length.
\item[•] $I_{\underbrace{{\scriptstyle00\ldots0}}_{n}}$ and $I_{\underbrace{{\scriptstyle11\ldots1}}_{n}}$ always have the same length.
\end{itemize}

\begin{theorem}\label{TS3}
There is a Cantor set $\mathcal{C}\subseteq[0,1]$ such that
\begin{equation*}
[-1,1]\setminus(\mathcal{C}^c-\mathcal{C})=\{0,\pm1\}\mbox{.}
\end{equation*}
\end{theorem}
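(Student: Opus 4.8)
The plan is to verify that the Cantor set $\mathscr{C}$ constructed just above the theorem is the desired example; for definiteness I will also take the ``mixed'' gaps $G_{s_1\ldots s_n}$ to be placed symmetrically, so that $\mathscr{C}=1-\mathscr{C}$. As observed in the introduction, $-1$, $0$ and $1$ never belong to $\mathcal{C}^c-\mathcal{C}$ for any Cantor set, so it suffices to prove the reverse inclusion $(-1,0)\cup(0,1)\subseteq\mathscr{C}^c-\mathscr{C}$; and since $\mathscr{C}=1-\mathscr{C}$ forces $\mathscr{C}^c-\mathscr{C}$ to be symmetric about $0$, it is enough to establish $(0,1)\subseteq\mathscr{C}^c-\mathscr{C}$.

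Everything will be deduced from \cref{LLG}(i), fed with the three displayed structural properties of $\mathscr{C}$. The first, easy step is to apply \cref{LLG}(i) to the gap $G$ with $[a,b]=[0,l(G)]=I_0$: this interval does not contain $G$, and every gap of $\mathscr{C}\cap I_0$ has length at most $c_2<c_1=|G|$, so we obtain $(0,r(G))\subseteq\mathscr{C}^c-\mathscr{C}$. The substance of the argument is to climb the right-hand chain of gaps $G,G_1,G_{11},\dots$, writing $G_{\underbrace{{\scriptstyle1\ldots1}}_{0}}\coloneqq G$ by convention. Fixing $n\ge1$, I apply \cref{LLG}(i) to the gap $G_{\underbrace{{\scriptstyle1\ldots1}}_{n}}$ (of length $c_{n+1}$) together with the interval $[a,b]=\bigl[0,\,|I_{\underbrace{{\scriptstyle0\ldots0}}_{n+1}}|\bigr]=I_{\underbrace{{\scriptstyle0\ldots0}}_{n+1}}$; this interval lies near $0$, hence is disjoint from (a fortiori does not contain) $G_{\underbrace{{\scriptstyle1\ldots1}}_{n}}$, and all of its gaps have length at most $c_{n+2}<c_{n+1}$, so the hypothesis of \cref{LLG}(i) holds. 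It yields
\[
\bigl(l(G_{\underbrace{{\scriptstyle1\ldots1}}_{n}})-|I_{\underbrace{{\scriptstyle0\ldots0}}_{n+1}}|,\ r(G_{\underbrace{{\scriptstyle1\ldots1}}_{n}})\bigr)\subseteq\mathscr{C}^c-\mathscr{C}.
\]

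The next step is to simplify the left endpoint. Using $r(G_{\underbrace{{\scriptstyle1\ldots1}}_{n}})=c(I_{\underbrace{{\scriptstyle1\ldots1}}_{n}})$, $|G_{\underbrace{{\scriptstyle1\ldots1}}_{n}}|=c_{n+1}$, and $|I_{\underbrace{{\scriptstyle0\ldots0}}_{n+1}}|=\frac12|I_{\underbrace{{\scriptstyle0\ldots0}}_{n}}|=\frac12|I_{\underbrace{{\scriptstyle1\ldots1}}_{n}}|$ (the third bullet is essential here), a short computation gives $l(G_{\underbrace{{\scriptstyle1\ldots1}}_{n}})-|I_{\underbrace{{\scriptstyle0\ldots0}}_{n+1}}|=l(I_{\underbrace{{\scriptstyle1\ldots1}}_{n}})-c_{n+1}=r(G_{\underbrace{{\scriptstyle1\ldots1}}_{n-1}})-c_{n+1}$. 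Abbreviating $R_n\coloneqq r(G_{\underbrace{{\scriptstyle1\ldots1}}_{n}})=c(I_{\underbrace{{\scriptstyle1\ldots1}}_{n}})$, I have now shown $(0,R_0)\subseteq\mathscr{C}^c-\mathscr{C}$ and $(R_{n-1}-c_{n+1},R_n)\subseteq\mathscr{C}^c-\mathscr{C}$ for all $n\ge1$. Since $c_{n+1}>0$ we have $R_{n-1}-c_{n+1}<R_{n-1}<R_n$, so consecutive intervals overlap and a trivial induction gives $(0,R_n)\subseteq\mathscr{C}^c-\mathscr{C}$ for every $n$. Finally $|I_{\underbrace{{\scriptstyle1\ldots1}}_{n}}|=2^{-(n-1)}|I_1|\to0$, whence $R_n=1-\frac12|I_{\underbrace{{\scriptstyle1\ldots1}}_{n}}|\to1$, and the union over $n$ yields $(0,1)\subseteq\mathscr{C}^c-\mathscr{C}$; together with the symmetry remark this finishes the proof.

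I expect the main point requiring care to be the endpoint identity $l(G_{\underbrace{{\scriptstyle1\ldots1}}_{n}})-|I_{\underbrace{{\scriptstyle0\ldots0}}_{n+1}}|=r(G_{\underbrace{{\scriptstyle1\ldots1}}_{n-1}})-c_{n+1}$: this is exactly where all three structural properties of $\mathscr{C}$ must be used in concert, and the ``extra'' slack $-c_{n+1}$ — which is absent in the central-Cantor computation of \cref{T13}, where the analogous intervals merely abut — is precisely what makes the intervals overlap, so that no point $r(G_{\underbrace{{\scriptstyle1\ldots1}}_{n}})$ is omitted. The remaining checks (admissibility of the chosen $[a,b]$ in \cref{LLG}(i), the geometric shrinking of $|I_{\underbrace{{\scriptstyle1\ldots1}}_{n}}|$, and the mirror-image statement for $(-1,0)$) are routine consequences of the construction and the strict decrease of $(c_n)$.
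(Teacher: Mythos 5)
Your proposal is correct and follows essentially the same route as the paper's proof: apply \cref{LLG}(i) to each gap $G_{\underbrace{\scriptstyle 1\ldots1}_{n}}$ paired with the leftmost stage-$(n{+}1)$ interval, check that the resulting intervals overlap (your endpoint identity $l(G_{\underbrace{\scriptstyle 1\ldots1}_{n}})-|I_{\underbrace{\scriptstyle 0\ldots0}_{n+1}}|=r(G_{\underbrace{\scriptstyle 1\ldots1}_{n-1}})-c_{n+1}$ matches the paper's computation at $n=1$ and correctly supplies the general inductive step the paper leaves implicit), and let $r(G_{\underbrace{\scriptstyle 1\ldots1}_{n}})\to1$. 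The only cosmetic difference is that you dispose of $(-1,0)$ by imposing the extra symmetry $\mathscr{C}=1-\mathscr{C}$ on the mixed gaps, whereas the paper runs the mirror argument along the left chain $G_0,G_{00},\ldots$; both are fine.
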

\begin{proof}
Let $\mathcal{C}\coloneqq\mathscr{C}$ constructed above. We will show that $(0,1)\subseteq\mathcal{C}^c-\mathcal{C}$. The argument for $(-1,0)\subseteq\mathcal{C}^c-\mathcal{C}$ follows symmetrically.

Since $G$ is strictly longer than every gap of $\mathcal{C}\cap[0,l(G)]$, we have, by (i) of \cref{LLG}, that
\begin{equation*}
(l(G)-l(G),r(G)-0)=(0,r(G))\subseteq\mathcal{C}^c-\mathcal{C}\mbox{.}
\end{equation*}
Similarly, since every gap on the left of $G_0$, that is, gap of $\mathcal{C}\cap[0,l(G_0)]$, is strictly shorter than $G_0$ and thus $G_1=(l(G_1),r(G_1))$, we have, by (i) of \cref{LLG}, that
\begin{equation*}
(l(G_1)-l(G_0),r(G_1)-0)=(l(G_1)-l(G_0),r(G_1))\subseteq\mathcal{C}^c-\mathcal{C}\mbox{.}
\end{equation*}
Also, note that
\begin{equation*}
l(G_0)=\frac{1}{2}|I_0|=\frac{1}{2}|I_1|=c(I_1)-l(I_1)=r(G_1)-r(G)\mbox{,}
\end{equation*}
and that $l(G_1)-r(G_1)<0$. See \cref{FII}.
\begin{figure}
\begin{tikzpicture}[scale=10]
\draw[](0,0)--(1,0)node[pos=0,anchor=east]{$I=[0,1]=$}
node[pos=1,anchor=south west]{$\mathscr{C}$};
\draw[line width=4pt,dashed](0/10,0)--(2/10,0);
\draw[line width=4pt,dashed](3.5/10,0)--(4/10,0);
\draw[line width=4pt,dashed](6/10,0)--(6.5/10,0);
\draw[line width=4pt,dashed](8/10,0)--(10/10,0);
\node[anchor=south]at(2/10,0.035){$I_{0}$};
\node[anchor=south]at(8/10,0.035){$I_{1}$};
\node[anchor=south]at(2.75/10,0){$G_0$};
\node[anchor=south]at(5/10,0){$G$};
\node[anchor=south]at(7.25/10,0){$G_1$};

\node[scale=2]at(0/10,0){$\textbf{[}$};
\node[scale=2]at(4/10,0){$\textbf{]}$};
\node[scale=2]at(6/10,0){$\textbf{[}$};
\node[scale=2]at(10/10,0){$\textbf{]}$};

\draw[<-,line width=1pt](2/10,0-0.01)--(2/10,0-0.04)node[pos=1,anchor=north]{$l(G_0)=\frac{1}{2}|I_0|=\frac{1}{2}|I_1|$};
\draw[<-,line width=1pt](6/10,0-0.04)--(5.9/10,0-0.07);
\node[anchor=north]at(5.8/10,0-0.07){$l(I_1)=r(G)$};
\draw[<-,line width=1pt](6.5/10,0+0.01)--(6.5/10,0+0.04)node[pos=1,anchor=south]{$l(G_1)$};
\draw[<-,line width=1pt](8/10,0-0.01)--(8.2/10,0-0.07);
\node[anchor=north]at(8.4/10,0-0.07){$c(I_1)=r(G_1)$};

\draw[decorate,decoration=brace,thick](8/10-0.01,0-0.02)--(6/10+0.01,0-0.02)node[pos=1/2,anchor=north]{$\frac{1}{2}|I_1|$};
\end{tikzpicture}
\caption{Illustration of showing $l(G_0)=r(G_1)-r(G)$ and $l(G_1)-r(G_1)<0$ in the proof of \cref{TS3}.}\label{FII}
\end{figure}
The inequality
\begin{equation*}
l(G_1)-l(G_0)=l(G_1)-(r(G_1)-r(G))=l(G_1)-r(G_1)+r(G)<r(G)
\end{equation*}
shows that the right endpoint of $(0,r(G))$ is strictly greater than the left endpoint of $(l(G_1)-l(G_0),r(G_1))$. It follows that $(0,r(G))\cup(l(G_1)-l(G_0),r(G_1))=(0,r(G_1))$.

Inductively, we can show that for any $n\in\mathbb{N}$, the interval $(0,r(G_{\underbrace{\scriptstyle11\ldots1}_{n}}))\subseteq\mathcal{C}^c-\mathcal{C}$. Moreover, since $r(G_{\underbrace{\scriptstyle11\ldots1}_{n}})\to1$ as $n\to\infty$, it follows that $(0,1)\subseteq\mathcal{C}^c-\mathcal{C}$.
\end{proof}

Using this particular Cantor set $\mathscr{C}\subseteq[0,1]$, $\mathcal{C}^c-\mathcal{C}$ is maximized, covering all of $[-1,1]\setminus\{-1,0,1\}$. In the next section, we shift focus in the opposite direction and explore how to minimize $\mathcal{C}^c-\mathcal{C}$ in sense of Lebesgue measure.

\section{Measure of $[-1,1]\setminus(\mathcal{C}^c-\mathcal{C})$}\label{S4}
Recall that the classical Cantor ternary $\mathfrak{C}\subseteq[0,1]$ is ``small'' in both the sense of Baire category and Lebesgue measure, that is, it is meager and has measure zero. Consequently, its complement $\mathfrak{C}^c\subseteq[0,1]$ is ``big'' in both senses, that is, it is comeager and has full measure in $[0,1]$. It follows that $\mathfrak{C}^c-\mathfrak{C}=\bigcup_{t\in\mathfrak{C}}\mathfrak{C}^c-t$ must also be ``big'' in both senses in $[-1,1]$. However, unlike the classical Cantor set, a Cantor $\mathcal{C}\subseteq[0,1]$ in general may have positive Lebesgue measure. This leads to the following natural question:
\begin{center}
{\em Given $\mathcal{C}$ of varying ``fatness'', is $\mathcal{C}^c-\mathcal{C}$ necessarily of full measure in $[-1,1]$?}
\end{center}
From the perspective of Lebesgue measure, it is particularly interesting that our findings suggest a stark contrast in the behavior of $\mathcal{C}^c-\mathcal{C}$ in $[-\frac{1}{2},\frac{1}{2}]$ and in $[-1,-\frac{1}{2}]\cup[\frac{1}{2},1]$. By $m(A)$, we will denote the Lebesgue measure of a set $A\subseteq\mathbb{R}$. Note that the Steinhaus theorem plays a crucial role in this section,\footnote{In its classical form, it states that if $m(A)>0$, then $A-A$ contains an open neighbourhood of zero. See \cite[Th\'{e}or\`{e}me VIII]{Steinhaus1920} or \cite{MR4080557}.} as a variant of it is used in the proofs of \cref{TS1212,TS1234,RB}. In particular, the version we use states that if $m(A)>0$ and $m(B)>0$, then $A+B$ contains an interval.\footnote{In fact, Steinhaus stated in \cite[Th\'{e}or\`{e}me VII]{Steinhaus1920} that if $m(A)>0$ and $m(B)>0$, then $A-B$ contains an interval. The corresponding result for $A+B$ appears as an exercise in \cite[Exercise 5, Chapter 7]{MR0924157}.}

\begin{theorem}\label{TS1212}
Let $\mathcal{C}\subseteq[0,1]$ be a Cantor set, and define $S\coloneqq[-1,1]\setminus(\mathcal{C}^c-\mathcal{C})$. Then $S\cap[-\frac{1}{2},\frac{1}{2}]$ has Lebesgue measure zero.
\end{theorem}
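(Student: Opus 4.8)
The plan is to reformulate membership in $S$ through \cref{LS} and then apply the Steinhaus-type theorem quoted above directly to $S$, exploiting that the ``right half'' of $S$ is closed under addition; this will clash with the nowhere density of $\mathcal{C}$.

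First I would record that, applying \cref{LS} to singletons, for $y\in[-1,1]$ one has $y\in S$ if and only if $(\mathcal{C}+y)\cap[0,1]\subseteq\mathcal{C}$. Since $0,1\in\mathcal{C}$, the complement $\mathcal{C}^c=(0,1)\setminus\mathcal{C}$ is open in $\mathbb{R}$, hence $\mathcal{C}^c-\mathcal{C}=\bigcup_{c\in\mathcal{C}}(\mathcal{C}^c-c)$ is open and $S$ is closed, in particular Borel and measurable. The reflection $x\mapsto 1-x$ sends $\mathcal{C}$ to another Cantor set and $S$ to $-S$, so it is enough to prove $m(S\cap[0,\tfrac12])=0$; the estimate on $S\cap[-\tfrac12,0]$ then follows by symmetry.

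The crucial point is that the set $T^+\coloneqq\{y\ge 0:(\mathcal{C}+y)\cap[0,1]\subseteq\mathcal{C}\}$ is closed under addition. Indeed, if $y,y'\ge 0$ lie in $T^+$ and $c\in\mathcal{C}$ with $c+y+y'\in[0,1]$, then $c+y\in[0,1]$ (as $y'\ge0$), so $c+y\in\mathcal{C}$, and then $(c+y)+y'\in[0,1]$ with $c+y\in\mathcal{C}$ forces $c+y+y'\in\mathcal{C}$; hence $y+y'\in T^+$. Now assume for contradiction that $m(S\cap[0,\tfrac12])>0$ and set $A\coloneqq S\cap[0,\tfrac12]\subseteq T^+$, a Borel set of positive measure. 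By the Steinhaus variant, $A+A$ contains an interval $(\alpha,\beta)$, and $A\subseteq[0,\tfrac12]$ gives $(\alpha,\beta)\subseteq A+A\subseteq[0,1]$. By additive closure of $T^+$, every $y\in(\alpha,\beta)$ lies in $T^+$, so $(\mathcal{C}+y)\cap[0,1]\subseteq\mathcal{C}$; choosing $c=0\in\mathcal{C}$ yields $y=0+y\in[0,1]$, whence $y\in\mathcal{C}$. Thus $(\alpha,\beta)\subseteq\mathcal{C}$, contradicting the nowhere density of $\mathcal{C}$. Therefore $m(S\cap[0,\tfrac12])=0$.

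I expect the main obstacle to be precisely the observation that one should pass to the one-sided additive semigroup $T^+$: once that is identified, the Steinhaus theorem does the heavy lifting by converting positive measure into an honest subinterval of $\mathcal{C}$. The only other point requiring care is the reflection step for the left half $S\cap[-\tfrac12,0]$: carrying out the argument there directly, one works with $T^-\coloneqq\{y\le 0:(\mathcal{C}+y)\cap[0,1]\subseteq\mathcal{C}\}$ (again additively closed, now using $y\le 0$ to place $c+y'$ inside $[0,1]$) and takes $c=1\in\mathcal{C}$ rather than $c=0$, concluding $1+(\alpha,\beta)\subseteq\mathcal{C}$.
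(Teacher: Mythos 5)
Your proof is correct and takes essentially the same route as the paper: both arguments apply the Steinhaus sumset theorem to two positive-measure sets whose sum \cref{LS} forces inside $\mathcal{C}$, contradicting nowhere density. The only real difference is cosmetic --- the paper sums $\mathcal{C}\cap[0,\frac{1}{2}]$ with $Y=S\cap[0,\frac{1}{2}]$ directly, whereas you sum $S\cap[0,\frac{1}{2}]$ with itself and route the containment into $\mathcal{C}$ through the additive semigroup $T^+$; since $S\cap[0,\frac{1}{2}]\subseteq\mathcal{C}$ (take $c=0$ in \cref{LS}), your sumset is contained in theirs and the semigroup observation, while valid, is an unnecessary detour.
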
  
\begin{proof}
Suppose that $S\cap[0,\frac{1}{2}]$ has positive Lebesgue measure. Since $S\subseteq\mathcal{C}\cup(\mathcal{C}-1)$, it follows that $\mathcal{C}\cap[0,\frac{1}{2}]$ also has positive Lebesgue measure. Let $Y=S\cap[0,\frac{1}{2}]$. we have, by \cref{LS}, that
\begin{equation*}
\textstyle(\mathcal{C}\cap[0,\frac{1}{2}])+Y=((\mathcal{C}\cap[0,\frac{1}{2}])+Y)\cap[0,1]\subseteq(\mathcal{C}+Y)\cap[0,1]\subseteq\mathcal{C}\mbox{,}
\end{equation*}
so $(\mathcal{C}\cap[0,\frac{1}{2}])+Y\subseteq\mathcal{C}$. Since $\mathcal{C}\cap[0,\frac{1}{2}]$ and $Y$ both have positive Lebesgue measure, their sum contains an interval by Steinhaus theorem. Hence, $\mathcal{C}$ contains an interval, which leads to a contradiction.

Similarly, having positive measure in $S\cap[-\frac{1}{2},0]$ also leads to a contradiction.
\end{proof}

As described in \cref{TS1212}, the set $S\cap[-\frac{1}{2},\frac{1}{2}]$ is always ``small'' in the sense of Lebesgue measure. In particular,
\begin{center}
{\em $\mathcal{C}^c-\mathcal{C}$ always has full Lebesgue measure in $[-\frac{1}{2},\frac{1}{2}]$ regardless of the ``fatness'' of $\mathcal{C}$.}
\end{center}

While $S$ may have positive Lebesgue measure outside this central interval $[-\frac{1}{2},\frac{1}{2}]$, some symmetry constraints still apply. We will first address these consideration in \cref{TS1234}. Finally, in \cref{CSPM}, we will show that the Lebesgue measure of $S$ can be as big as $\frac{1}{2}$. In other words,
\begin{center}
{\em $\mathcal{C}^c-\mathcal{C}$ does not necessarily have full Lebesgue measure in $[-1,-\frac{1}{2}]\cup[\frac{1}{2},1]$.}
\end{center}

\begin{theorem}\label{TS1234}
Let $\mathcal{C}\subseteq[0,1]$ be a Cantor set, and define $S\coloneqq[-1,1]\setminus(\mathcal{C}^c-\mathcal{C})$.
\begin{enumerate}[(i)]
\item $S\cap[-1,-\frac{3}{4}]$ and $S\cap[\frac{1}{2},\frac{3}{4}]$ cannot both have positive Lebesgue measure.
\item $S\cap[-\frac{3}{4},-\frac{1}{2}]$ and $S\cap[\frac{3}{4},1]$ cannot both have positive Lebesgue measure.
\end{enumerate}
\end{theorem}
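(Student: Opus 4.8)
The plan is to argue each part by contradiction, using only three ingredients already at hand: the inclusion $S\subseteq\mathcal{C}\cup(\mathcal{C}-1)$ from the proof of \cref{TS1212} (so in particular $S\cap[0,1]\subseteq\mathcal{C}$ and $S\cap[-1,0]\subseteq\mathcal{C}-1$), the equivalence of \cref{LS}, and the $A+B$ form of the Steinhaus theorem quoted above. The common mechanism is: feed one of the two sets in question into \cref{LS} in the role of $Y$, translate the other one into $\mathcal{C}$ (by $+1$ if it lies in $[-1,0]$, by $0$ if it already lies in $[0,1]$), and arrange the translates so that their sumset lies entirely inside $[0,1]$. Then \cref{LS} forces that sumset into $\mathcal{C}$, while Steinhaus forces it to contain an interval, contradicting the nowhere density of $\mathcal{C}$. (All the sets involved are Lebesgue measurable: $\mathcal{C}^c-\mathcal{C}$ is $\sigma$-compact, so $S$ is Borel.)

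For (i), suppose $A\coloneqq S\cap[-1,-\frac{3}{4}]$ and $B\coloneqq S\cap[\frac{1}{2},\frac{3}{4}]$ both have positive measure. Since $A\subseteq[-1,0)$ and $\mathcal{C}\subseteq[0,1]$, the inclusion $S\subseteq\mathcal{C}\cup(\mathcal{C}-1)$ gives $A+1\subseteq\mathcal{C}$, with $A+1\subseteq[0,\frac{1}{4}]$. Applying \cref{LS} to $Y=B$ gives $(\mathcal{C}+B)\cap[0,1]\subseteq\mathcal{C}$. Now $(A+1)+B\subseteq\mathcal{C}+B$, and since $[0,\frac{1}{4}]+[\frac{1}{2},\frac{3}{4}]=[\frac{1}{2},1]\subseteq[0,1]$, the whole set $(A+1)+B$ lies in $[0,1]$, hence in $\mathcal{C}$. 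But $m(A+1)=m(A)>0$ and $m(B)>0$, so Steinhaus produces an interval inside $(A+1)+B\subseteq\mathcal{C}$ — impossible.

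For (ii) the same scheme runs with the roles swapped: if $A\coloneqq S\cap[-\frac{3}{4},-\frac{1}{2}]$ and $B\coloneqq S\cap[\frac{3}{4},1]$ both have positive measure, then $A+1\subseteq\mathcal{C}\cap[\frac{1}{4},\frac{1}{2}]$ and $B\subseteq\mathcal{C}\cap[\frac{3}{4},1]$; applying \cref{LS} to $Y=A$ gives $(\mathcal{C}+A)\cap[0,1]\subseteq\mathcal{C}$, and since $B+A\subseteq[\frac{3}{4},1]+[-\frac{3}{4},-\frac{1}{2}]=[0,\frac{1}{2}]\subseteq[0,1]$ we get $B+A\subseteq\mathcal{C}$, again contradicting Steinhaus and nowhere density. (Alternatively, (ii) is just (i) for the reflected Cantor set $1-\mathcal{C}$: one checks $(1-\mathcal{C})^c-(1-\mathcal{C})=-(\mathcal{C}^c-\mathcal{C})$, so $S(1-\mathcal{C})=-S$, and the two intervals of (i) reflect onto the two intervals of (ii).)

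The one point that needs care — and the reason the fractions $\frac{3}{4}$ and $\frac{1}{2}$ appear exactly as they do — is the third step: the Steinhaus sumset must sit inside $[0,1]$ for \cref{LS} to pin it into $\mathcal{C}$. This is precisely why the negative block has to be translated by $+1$ first, and why, say, (i) cannot be strengthened to a claim about $S\cap[-1,-\frac{1}{2}]$ and $S\cap[\frac{1}{2},1]$: there the translated sumset $[0,\frac{1}{2}]+[\frac{1}{2},1]=[\frac{1}{2},\frac{3}{2}]$ would spill past $1$, where \cref{LS} says nothing. Everything else is routine bookkeeping with interval arithmetic and translation invariance of Lebesgue measure.
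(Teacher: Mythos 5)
Your proposal is correct and follows essentially the same route as the paper: translate the negative block into $\mathcal{C}$ via $S\subseteq\mathcal{C}\cup(\mathcal{C}-1)$, feed the other block into \cref{LS} as $Y$ so the sumset (which by the choice of the quarter-length intervals stays inside $[0,1]$) lands in $\mathcal{C}$, and invoke Steinhaus to contradict nowhere density. The only differences are cosmetic — you sum $A+1$ with $B$ where the paper sums $\mathcal{C}\cap[0,\frac{1}{4}]$ with $Y$, and you write out part (ii) (plus a reflection argument) where the paper says it is analogous.
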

\begin{proof}
Suppose both $S\cap[-1,-\frac{3}{4}]$ and $S\cap[\frac{1}{2},\frac{3}{4}]$ have positive Lebesgue measure. Since $S\subseteq\mathcal{C}\cup(\mathcal{C}-1)$, it follows that $\mathcal{C}\cap[0,\frac{1}{4}]$ also has positive Lebesgue measure. See \cref{FS1234}.
\begin{figure}
\begin{tikzpicture}[scale=10]
\draw[](0,0)--(1,0)node[pos=0,anchor=east]{$[-1,1]=$};
\draw[->,line width=4pt,dotted](0/10,0)--(1.25/10,0);
\draw[->,line width=4pt,dotted](5/10,0)--(6.25/10,0);
\draw[line width=4pt,dashed](7.5/10,0)--(8.75/10,0);
\node[anchor=south]at(0.625/10,0.07){$S\cap[-1,-\frac{3}{4}]\subseteq\mathcal{C}-1$};
\node[anchor=south]at(5.625/10,0.07){$(S\cap[-1,-\frac{3}{4}])+1\subseteq\mathcal{C}$};
\node[anchor=south]at(8.125/10,0.12){$Y\coloneqq S\cap[\frac{1}{2},\frac{3}{4}]$};
\draw[decorate,decoration=brace,thick](10/10-0.01,0-0.03)--(5/10+0.01,0-0.03)node[pos=1/2,anchor=north]{$\mathcal{C}$};
\draw[decorate,decoration=brace,thick](5/10-0.01,0-0.03)--(0/10+0.01,0-0.03)node[pos=1/2,anchor=north]{$\mathcal{C}-1$};
\draw[line width=2pt](0/10,0.01)--(0/10,-0.01)node[pos=0,anchor=south]{$-1$};
\draw[](1.25/10,0.01)--(1.25/10,-0.01)node[pos=0,anchor=south]{$-\frac{3}{4}$};
\draw[line width=2pt](2.5/10,0.01)--(2.5/10,-0.01)node[pos=0,anchor=south]{$-\frac{1}{2}$};
\draw[](3.75/10,0.01)--(3.75/10,-0.01)node[pos=0,anchor=south]{$-\frac{1}{4}$};
\draw[line width=2pt](5/10,0.01)--(5/10,-0.01)node[pos=0,anchor=south]{$0$};
\draw[](6.25/10,0.01)--(6.25/10,-0.01)node[pos=0,anchor=south]{$\frac{1}{4}$};
\draw[line width=2pt](7.5/10,0.01)--(7.5/10,-0.01)node[pos=0,anchor=south]{$\frac{1}{2}$};
\draw[](8.75/10,0.01)--(8.75/10,-0.01)node[pos=0,anchor=south]{$\frac{3}{4}$};
\draw[line width=2pt](10/10,0.01)--(10/10,-0.01)node[pos=0,anchor=south]{$1$};
\end{tikzpicture}
\caption{Illustration of the setup of the proof of \cref{TS1234}.}\label{FS1234}
\end{figure}
Let $Y\coloneqq S\cap[\frac{1}{2},\frac{3}{4}]$. we have, by \cref{LS}, that
\begin{equation*}
\textstyle(\mathcal{C}\cap[0,\frac{1}{4}])+Y=((\mathcal{C}\cap[0,\frac{1}{4}])+Y)\cap[0,1]\subseteq(\mathcal{C}+Y)\cap[0,1]\subseteq\mathcal{C}\mbox{,}
\end{equation*}
so $(\mathcal{C}\cap[0,\frac{1}{4}])+Y\subseteq\mathcal{C}$. Since $\mathcal{C}\cap[0,\frac{1}{4}]$ and $Y$ both have positive Lebesgue measure, their sum contains an interval by Steinhaus theorem. Hence, $\mathcal{C}$ contains an interval, which leads to a contradiction.

Condition (ii) can be proved in the same way.
\end{proof}

\begin{corollary}\label{CS1234}
Let $\mathcal{C}\subseteq[0,1]$ be a Cantor set, and define $S\coloneqq[-1,1]\setminus(\mathcal{C}^c-\mathcal{C})$.
\begin{enumerate}[(i)]
\item If $m(S\cap[\frac{1}{2},1])>\frac{1}{4}$, then $m(S\cap[-1,-\frac{1}{2}])=0$.
\item If $m(S\cap[-1,-\frac{1}{2}])>\frac{1}{4}$, then $m(S\cap[\frac{1}{2},1])=0$.
\end{enumerate}
\end{corollary}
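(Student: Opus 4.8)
The plan is to derive \cref{CS1234} directly from \cref{TS1234} by a simple measure-splitting argument. For part (i), I would start by assuming $m(S\cap[\frac12,1])>\frac14$. The key observation is that $[\frac12,1]$ decomposes (up to the single point $\frac34$, which is measure-negligible) into the two subintervals $[\frac12,\frac34]$ and $[\frac34,1]$, each of length exactly $\frac14$. Hence
\begin{equation*}
\textstyle m\bigl(S\cap[\tfrac12,\tfrac34]\bigr)+m\bigl(S\cap[\tfrac34,1]\bigr)=m\bigl(S\cap[\tfrac12,1]\bigr)>\tfrac14\mbox{.}
\end{equation*}
Since each of the two summands is bounded above by $\frac14$ (the length of the corresponding interval), neither of them can be zero; otherwise the sum would be at most $\frac14$. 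Therefore both $m(S\cap[\frac12,\frac34])>0$ and $m(S\cap[\frac34,1])>0$.

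Now I would feed these two positivity facts into \cref{TS1234}. From $m(S\cap[\frac12,\frac34])>0$ together with part (i) of \cref{TS1234}, it follows that $m(S\cap[-1,-\frac34])=0$. From $m(S\cap[\frac34,1])>0$ together with part (ii) of \cref{TS1234}, it follows that $m(S\cap[-\frac34,-\frac12])=0$. Finally, since $[-1,-\frac12]=[-1,-\frac34]\cup[-\frac34,-\frac12]$, subadditivity of Lebesgue measure gives
\begin{equation*}
\textstyle m\bigl(S\cap[-1,-\tfrac12]\bigr)\le m\bigl(S\cap[-1,-\tfrac34]\bigr)+m\bigl(S\cap[-\tfrac34,-\tfrac12]\bigr)=0\mbox{,}
\end{equation*}
which proves (i). Part (ii) is obtained by the same argument with the roles of the positive and negative halves of $[-1,1]$ interchanged, again invoking both parts of \cref{TS1234}.

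There is essentially no obstacle here: the whole content has already been established in \cref{TS1234}, and the corollary is just the observation that exceeding the measure $\frac14$ on a length-$\frac12$ interval forces positive measure on \emph{both} of its length-$\frac14$ halves, so both exclusion statements in \cref{TS1234} become applicable simultaneously. The only point to be slightly careful about is the treatment of the shared endpoints $\pm\frac34$ of the subintervals, but these are single points and hence contribute nothing to Lebesgue measure, so the additivity and subadditivity steps above go through verbatim.
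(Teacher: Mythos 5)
Your proof is correct and is essentially the paper's argument run in the direct rather than the contrapositive direction: the paper assumes $m(S\cap[-1,-\frac{1}{2}])>0$, extracts one positive-measure quarter on the negative side, and uses \cref{TS1234} to kill one quarter on the positive side, forcing $m(S\cap[\frac{1}{2},1])\le\frac{1}{4}$. Both versions rest on the same decomposition into the four length-$\frac{1}{4}$ subintervals and the same appeal to both parts of \cref{TS1234}, so there is no substantive difference.
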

\begin{proof}
To see (i), suppose $m(S\cap[-1,-\frac{1}{2}])>0$. Then, at least one of the sets $S\cap[-1,-\frac{3}{4}]$ or $S\cap[-\frac{3}{4},-\frac{1}{2}]$ must have positive Lebesgue measure. By \cref{TS1234}, it follows that at least one of $S\cap[\frac{1}{2},\frac{3}{4}]$ or $S\cap[\frac{3}{4},1]$ must have zero Lebesgue measure. Consequently, the Lebesgue measure of $S\cap[\frac{1}{2},1]$ is not greater than $\frac{1}{4}$, contradicting the assumption.

The arguments for (ii) follow by identical reasoning.
\end{proof}

\begin{corollary}\label{CSMB}
Let $\mathcal{C}\subseteq[0,1]$ be a Cantor set, and define $S\coloneqq[-1,1]\setminus(\mathcal{C}^c-\mathcal{C})$. Then
\begin{center}
$0\leq m(S)<\frac{1}{2}$, or equivalently, $\frac{3}{2}<m(\mathcal{C}^c-\mathcal{C})\leq2$.
\end{center}
\end{corollary}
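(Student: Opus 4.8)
Since $\mathcal{C}^c$ and $\mathcal{C}$ are both subsets of $[0,1]$, we have $\mathcal{C}^c-\mathcal{C}\subseteq[-1,1]$, and its complement $S$ in $[-1,1]$ is closed and nowhere dense (as noted at the start of \cref{S3}); hence $\mathcal{C}^c-\mathcal{C}=[-1,1]\setminus S$ is measurable with $m(\mathcal{C}^c-\mathcal{C})=2-m(S)$, so the asserted equivalence holds and it suffices to prove $0\le m(S)<\tfrac12$. The lower bound is immediate. For the upper bound, \cref{TS1212} gives $m\bigl(S\cap[-\tfrac12,\tfrac12]\bigr)=0$, so, setting
\begin{equation*}
L\coloneqq m\bigl(S\cap[-1,-\tfrac12]\bigr)\qquad\text{and}\qquad R\coloneqq m\bigl(S\cap[\tfrac12,1]\bigr),
\end{equation*}
we have $m(S)=L+R$ and must show $L+R<\tfrac12$. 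I will repeatedly use the elementary fact that, $S$ being closed and nowhere dense, $m(S\cap J)<|J|$ for every nondegenerate interval $J$ (a closed nowhere dense set cannot fill an interval up to a null set).

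The plan is a short case analysis driven by \cref{CS1234}. If $R>\tfrac14$, then \cref{CS1234}(i) forces $L=0$, so $m(S)=R<\tfrac12$ by the fact just stated (with $J=[\tfrac12,1]$); symmetrically, if $L>\tfrac14$ then \cref{CS1234}(ii) gives $m(S)=L<\tfrac12$. There remains the case $L\le\tfrac14$ and $R\le\tfrac14$, in which $m(S)=L+R\le\tfrac12$ for free, so the whole problem reduces to excluding the extreme possibility $L=R=\tfrac14$.

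To exclude it I would pass to the quarter-intervals, setting $p=m(S\cap[-1,-\tfrac34])$, $q=m(S\cap[-\tfrac34,-\tfrac12])$, $u=m(S\cap[\tfrac12,\tfrac34])$ and $v=m(S\cap[\tfrac34,1])$, so that $p+q=\tfrac14$ and $u+v=\tfrac14$. By \cref{TS1234}(i), $p=0$ or $u=0$. If $p=0$, then $q=\tfrac14>0$, so \cref{TS1234}(ii) forces $v=0$ and hence $u=\tfrac14$; if instead $u=0$, then $v=\tfrac14>0$, so \cref{TS1234}(ii) forces $q=0$ and hence $p=\tfrac14$. In the first subcase $S\cap[\tfrac12,\tfrac34]$ has full measure in $[\tfrac12,\tfrac34]$, and in the second $S\cap[-1,-\tfrac34]$ has full measure in $[-1,-\tfrac34]$; either outcome contradicts the fact that $m(S\cap J)<|J|$ for nondegenerate $J$. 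Hence $L=R=\tfrac14$ cannot occur, so $m(S)<\tfrac12$, which finishes the proof.

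The step I expect to take the most care is precisely this last one, i.e., upgrading $m(S)\le\tfrac12$ to the strict inequality $m(S)<\tfrac12$: the measure estimates of \cref{CS1234} and \cref{TS1234} on their own only yield $m(S)\le\tfrac12$, and strictness is squeezed out by refining the dyadic subdivision one level further and invoking the topological fact that a closed nowhere dense set cannot fill an interval up to measure zero. Apart from organizing the case distinction carefully, I foresee no genuine difficulty.
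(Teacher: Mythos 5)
Your proposal is correct and follows essentially the same route as the paper: \cref{TS1212} kills the middle, \cref{TS1234} and \cref{CS1234} drive the case analysis on the quarter-intervals to get $m(S)\le\frac12$, and the extreme case is excluded because the closed nowhere dense set $S$ cannot have full measure in any nondegenerate interval. You merely spell out the case distinction that the paper leaves as ``easy to see,'' so nothing further is needed.
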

\begin{proof}
To show that $m(S)$ can be as small as zero, take the classical Cantor ternary set or any example mentioned in \cref{S2,S3}. 

On the other hand, we now decide an upper bound for $m(S)$. By \cref{TS1212}, we have $m(S)=m(S\cap[-1,-\frac{1}{2}])+m(S\cap[\frac{1}{2},1])\leq1$. In addition, incorporating \cref{TS1234,CS1234} on
\begin{align*}
\textstyle m(S)=m(S\cap[-1,-\frac{3}{4}])+m(S\cap[-\frac{3}{4},-\frac{1}{2}])+m(S\cap[\frac{1}{2},\frac{3}{4}])+m(S\cap[\frac{3}{4},1])\mbox{,}
\end{align*}
it is easy to see that $m(S)\leq\frac{1}{2}$ case by case.

Lastly, we rule out the case where $m(S)=\frac{1}{2}$. Suppose $m(S)=\frac{1}{2}$, and again incorporate \cref{TS1234,CS1234}. It is easy to see that two out of the four sets $S\cap[-1,-\frac{3}{4}]$, $S\cap[-\frac{3}{4},-\frac{1}{2}]$, $S\cap[\frac{1}{2},\frac{3}{4}]$, and $S\cap[\frac{3}{4},1]$ must have zero Lebesgue measure, forcing the other two to have full Lebesgue measure. However, $S$ cannot have full Lebesgue measure in any nontrivial subinterval in $[-1,1]$, because, by definition, $S$ is the complement of a dense open set $\bigcup_{t\in\mathcal{C}}\mathcal{C}^c-t$, which has positive Lebesgue measure in every nontrivial subinterval in $[-1,1]$.
\end{proof}

So, we know that $\frac{1}{2}$ is an upper bound for the Lebesgue measure of $S$. But we still do not clearly know whether $S$ can have positive Lebesgue measure or not. In what follows, we will go through two theorems that describe a way to increase the ``size'' of $S$, and ultimately show that $\frac{1}{2}$ is the least upper bound for $m(S)$ in \cref{CSPM}.

\begin{theorem}\label{TAB}
Let $A\subseteq[0,\frac{1}{2}]$ be a Cantor set. If $B\subseteq[0,\frac{1}{2}]$ is a set such that $A+B\subseteq[0,1]$ is also a Cantor set, then there exists a Cantor set $\mathcal{C}\subseteq[0,1]$ such that $B+\frac{1}{2}\subseteq S\cap[\frac{1}{2},1]$, where $S\coloneqq[-1,1]\setminus(\mathcal{C}^c-\mathcal{C})$.
\end{theorem}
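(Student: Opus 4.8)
The plan is to build $\mathcal{C}$ so that its ``left half'' carries a shifted copy of $A$ and its ``right half'' carries a shifted copy of the Cantor set $A+B$, arranged so that translating $\mathcal{C}$ by any $y\in B+\tfrac12$ maps the left copy of $A$ into the right copy of $A+B$ while the rest of $\mathcal{C}$ falls outside $[0,1]$; then \cref{LS} will give $B+\tfrac12\subseteq S$. Concretely, I would place a scaled copy $\tfrac12 A$ inside $[0,\tfrac12]$ and a scaled copy $\tfrac12(A+B)$ inside $[\tfrac12,1]$, i.e.\ set $\mathcal{C}\coloneqq \tfrac12 A \;\cup\; \bigl(\tfrac12(A+B)+\tfrac12\bigr)$. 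One checks this is a Cantor set: it is compact, contains $0$ (since $0\in A$, as $A$ is a Cantor set in $[0,\tfrac12]$) and $1$ (since $\tfrac12\in[0,\tfrac12]$ forces $\sup(A+B)\le 1$, and we need $1\in A+B$; if not, one adjusts by rescaling so that the right copy is anchored at $1$), it is perfect because each piece is perfect, and it is nowhere dense because each piece is and there is a genuine gap $(\sup\tfrac12 A,\ \tfrac12+\inf\tfrac12(A+B))$ separating the two halves. The one subtlety is guaranteeing the endpoints $0,1\in\mathcal{C}$ exactly; since Cantor sets by \cref{DC} contain their endpoints, $0,\tfrac12\in A$ and $0\in A+B$, and one can arrange (possibly by an affine reparametrization of the two target intervals) that $\tfrac12 A$ starts at $0$ and $\tfrac12(A+B)+\tfrac12$ ends at $1$.

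The heart of the argument is the set computation. Fix $y\in B+\tfrac12$, say $y=b+\tfrac12$ with $b\in B$. I want to show $(\mathcal{C}+y)\cap[0,1]\subseteq\mathcal{C}$, which by \cref{LS} yields $y\in S$. Write $c\in\mathcal{C}$: either $c=\tfrac12 a$ with $a\in A$, or $c=\tfrac12(a+b')+\tfrac12$ with $a+b'\in A+B$. In the second case $c+y\ge \tfrac12+0+\tfrac12+\tfrac12>1$, so $c+y\notin[0,1]$ and there is nothing to check. In the first case, $c+y=\tfrac12 a + b + \tfrac12 = \tfrac12(a + 2b)+\tfrac12$; here I need $\tfrac12(a+2b)+\tfrac12\in[0,1]\Rightarrow a+2b\in A+B$, and then $c+y\in\tfrac12(A+B)+\tfrac12\subseteq\mathcal{C}$. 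This forces the scaling choice: I should actually use $\tfrac12 A$ on the left but shift $\mathcal C$ by $2b+\text{(a constant)}$ rather than $b+\tfrac12$ — so the clean construction is instead $\mathcal{C}\coloneqq A\cup\bigl(\text{an affine image of }A+B\text{ placed to the right}\bigr)$ with $A\subseteq[0,\tfrac12]$ kept unscaled, and the translate $y\in B+\tfrac12$ chosen exactly so that $A+y=A+B+\tfrac12$ lands on the right copy. I would pick the construction so that the right half of $\mathcal{C}$ is literally $(A+B)+\tfrac12$ restricted appropriately; then for $a\in A$ and $y=b+\tfrac12$, $a+y=(a+b)+\tfrac12\in (A+B)+\tfrac12\subseteq\mathcal{C}$ whenever $a+y\le 1$, and the right half, when translated by $y\ge\tfrac12$, exceeds $1$ and drops out.

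With the construction pinned down this way, the remaining steps are routine: (1) verify $\mathcal{C}\coloneqq A\cup\bigl((A+B)+\tfrac12\bigr)$ is a Cantor set in $[0,1]$ — compactness and perfectness are immediate, nowhere-density follows since $A+B$ is nowhere dense and the two pieces occupy $[0,\tfrac12]$ and $[\tfrac12,1]$ with at worst the single point $\tfrac12$ shared, while endpoints $0\in A$ and $1\in(A+B)+\tfrac12$ come from $0,\tfrac12\in A+B$ (the latter since $A,B\subseteq[0,\tfrac12]$ and one may assume $\tfrac12\in A+B$ after an affine adjustment, or simply note $\sup(A+B)$ can be taken to be $\tfrac12$ by the hypothesis $A+B\subseteq[0,1]$ together with a rescaling); (2) for each $y\in B+\tfrac12$ compute $(\mathcal{C}+y)\cap[0,1]\subseteq\mathcal{C}$ exactly as above, splitting on which half of $\mathcal{C}$ a point lies in; (3) invoke \cref{LS} with $Y=B+\tfrac12$ to conclude $B+\tfrac12\subseteq S$, and note $B+\tfrac12\subseteq[\tfrac12,1]$ since $B\subseteq[0,\tfrac12]$, giving $B+\tfrac12\subseteq S\cap[\tfrac12,1]$. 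The main obstacle is purely bookkeeping: choosing the affine placement of the right-hand copy of $A+B$ so that simultaneously (a) the whole set lies in $[0,1]$, (b) the endpoints $0$ and $1$ are attained, and (c) the translation by $B+\tfrac12$ carries $A$ onto the right copy on the nose rather than merely into it — getting these three constraints compatible is where care is needed, but the hypothesis $A+B\subseteq[0,1]$ with $A,B\subseteq[0,\tfrac12]$ leaves exactly enough room.
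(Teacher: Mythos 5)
Your final construction $\mathcal{C}=A\cup\bigl(((A+B)+\tfrac12)\cap[\tfrac12,1]\bigr)$ together with the verification $(\mathcal{C}+B+\tfrac12)\cap[0,1]\subseteq\mathcal{C}$ and the appeal to \cref{LS} with $Y=B+\tfrac12$ is exactly the paper's proof, which sets $\mathcal{C}\coloneqq A\cup E$ with $E\coloneqq(A+B+\tfrac12)\cap[\tfrac12,1]$. The detour through the scaled copies $\tfrac12 A$, $\tfrac12(A+B)$ and the suggested ``rescaling'' of $A+B$ are unnecessary (and rescaling would break the sum structure you rely on); intersecting the translate with $[\tfrac12,1]$ is the only ``restriction'' needed, and the endpoints $0\in A$ and $1\in E$ come for free from $0\in A$ and $\tfrac12\in A+B$ in the intended applications.
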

\begin{proof}
Let
\begin{center}
$\mathcal{C}\coloneqq A\cup E$, where $E\coloneqq(A+B+\frac{1}{2})\cap[\frac{1}{2},1]$.
\end{center}
It is easy to see that $\mathcal{C}$ is a Cantor set of $[0,1]$ due to its construction. See \cref{FAB}.
\begin{figure}[h]
\begin{tikzpicture}[scale=10]
\draw[](0,0)--(1,0)
node[pos=0,anchor=east]{$[0,\frac{3}{2}]=$};
\draw[line width=4pt,dashed](0,0)--(1/3,0);
\draw[line width=3pt,densely dotted](1/3,0)--(1,0);
\node[anchor=south]at(1/6,0.03){$A$ is Cantor};
\node[anchor=south]at(2/3,0.02){$A+B+\frac{1}{2}$ is Cantor};
\draw[decorate,decoration=brace,thick](1/3-0.01,0-0.02)--(0+0.01,0-0.02)node[pos=1/2,anchor=north]{$\displaystyle A$};
\draw[decorate,decoration=brace,thick](2/3-0.01,0-0.02)--(1/3+0.01,0-0.02)node[pos=1/2,anchor=north]{$\displaystyle E$};
\draw[line width=1pt,dashed](0/3,0-0.02)--(0/3,0+0.06)node[pos=0,anchor=north]{$0$};
\draw[line width=1pt,dashed](1/3,0-0.02)--(1/3,0+0.06)node[pos=0,anchor=north]{$\frac{1}{2}$};
\draw[line width=1pt,dashed](2/3,0-0.02)--(2/3,0+0.02)node[pos=0,anchor=north]{$1$};
\draw[line width=1pt,dashed](3/3,0-0.02)--(3/3,0+0.06)node[pos=0,anchor=north]{$\frac{3}{2}$};
\draw[decorate,decoration=brace,thick](2/3-0.01,0-0.09)--(0+0.01,0-0.09)node[pos=1/2,anchor=north]{$\displaystyle\mathcal{C}\coloneq A\cup E$};
\end{tikzpicture}
\caption{Illustration of the construction of the Cantor set $\mathcal{C}\subseteq[0,1]$ described in \cref{TAB}.}\label{FAB}
\end{figure}

We will show that $B+\frac{1}{2}\subseteq S\cap[\frac{1}{2},1]$. Indeed,
\begin{align*}
\textstyle(\mathcal{C}+B+\frac{1}{2})\cap[0,1]=((A\cup E)+B+\frac{1}{2})\cap[\frac{1}{2},1]&\textstyle=(A+B+\frac{1}{2})\cap[\frac{1}{2},1]\\
&=E\subseteq\mathcal{C}\mbox{.}
\end{align*}
Since $(C+B+\frac{1}{2})\cap[0,1]\subseteq\mathcal{C}$, we have $B+\frac{1}{2}\subseteq S$ by \cref{LS}. Also, since $B+\frac{1}{2}\subseteq[\frac{1}{2},1]$, we further conclude that $B+\frac{1}{2}\subseteq S\cap[\frac{1}{2},1]$.
\end{proof}
Now, we can use \cref{TAB} to show that the set $[-1,1]\setminus(\mathcal{C}^c-\mathcal{C})$ can contain Cantor sets of various types. Actually, we have even more general result.

\begin{theorem}\label{TB}
For every compact meager set $B\subseteq[0,\frac{1}{2}]$ containing $0$ and $\frac{1}{2}$, there exists a Cantor set $\mathcal{C}\subseteq[0,1]$ such that $B+\frac{1}{2}\subseteq S\cap[\frac{1}{2},1]$, where $S\coloneqq[-1,1]\setminus(\mathcal{C}^c-\mathcal{C})$.
\end{theorem}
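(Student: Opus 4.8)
The plan is to reduce \cref{TB} to \cref{TAB}. By \cref{TAB} it suffices to exhibit a Cantor set $A\subseteq[0,\frac12]$ for which $A+B$ is again a Cantor set — automatically $A+B\subseteq[0,\frac12]+[0,\frac12]=[0,1]$ — and then apply \cref{TAB} to this $A$ together with the given $B$. First note that, being compact and meager, $B$ is nowhere dense: a meager subset of $[0,\frac12]$ cannot contain a nonempty open set, since such a set would be meager in itself, contradicting the Baire category theorem; the same applies to the translate $B+\frac12\subseteq[\frac12,1]$. Next observe that for \emph{any} Cantor set $A\subseteq[0,\frac12]$ the set $A+B$ is compact, is perfect (if $a\in A$ is a limit of points $a_k\in A\setminus\{a\}$ and $b\in B$, then $a+b$ is a limit of $a_k+b\in(A+B)\setminus\{a+b\}$), contains $0=0+0$ and $1=\frac12+\frac12$ (here $\frac12\in A$ by \cref{DC} and $\frac12\in B$ by hypothesis), and is contained in $[0,1]$. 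So the entire difficulty is to arrange $A$ so that $A+B$ is \emph{nowhere dense}.

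I would construct $A=\bigcap_{n\geq0}A_n$ where $A_0=[0,\frac12]$ and each $A_n$ is a finite union of pairwise disjoint closed intervals, the leftmost having left endpoint $0$ and the rightmost having right endpoint $\frac12$, arranged so that every component of $A_{n-1}$ contains at least two components of $A_n$ and every component of $A_n$ has length $<2^{-n}$; any such $A$ is a Cantor set in $[0,\frac12]$. The crucial additional feature is introduced one step at a time. Fix a countable base $\{J_n\}_{n\geq1}$ of nonempty open subintervals of $(0,1)$, and for each $n$ choose $t_n\in J_n\setminus\bigl(B\cup(B+\frac12)\bigr)$, which is possible because $B$ and $B+\frac12$ are nowhere dense; then $t_n-B$ is compact and nowhere dense and contains neither $0$ nor $\frac12$. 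When constructing $A_n$ from $A_{n-1}$, inside each component $I$ of $A_{n-1}$ the set $I\setminus(t_n-B)$ is open and dense in $I$, hence contains at least two disjoint closed subintervals of length $<2^{-n}$; in the leftmost component I keep one such subinterval with left endpoint $0$ (legitimate since a neighbourhood of $0$ misses the closed set $t_n-B$), and symmetrically I keep $\frac12$ in the rightmost component. Taking $A_n$ to be the union of all the intervals chosen, we have $A_n\cap(t_n-B)=\emptyset$.

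Since $A=\bigcap_n A_n\subseteq A_n$, it follows that $A\cap(t_n-B)=\emptyset$, i.e. $t_n\notin A+B$, for every $n$. As every nonempty open subinterval of $(0,1)$ contains some $J_n$ and hence the point $t_n\notin A+B$, the complement of the closed set $A+B$ is dense in $[0,1]$, so $A+B$ is nowhere dense. Together with the first paragraph this shows $A+B$ is a Cantor set, and \cref{TAB} now furnishes a Cantor set $\mathcal{C}\subseteq[0,1]$ with $B+\frac12\subseteq S\cap[\frac12,1]$, as desired.

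I expect the main obstacle to be exactly the nowhere-density of $A+B$: summing a nowhere dense set with a Cantor set can easily fill an interval (as $\mathfrak{C}+\mathfrak{C}$ does), so $A$ must be chosen carefully and in coordination with $B$. The mechanism that makes this manageable is that disjointness from a closed set is inherited by all later approximations — once $A_n\cap(t_n-B)=\emptyset$, the same holds for $A_{n'}$ with $n'>n$ and hence for $A$ — which lets the countably many potential ``holes'' of $A+B$, indexed by the base $\{J_n\}$, be produced independently, one per stage, with no clash against the requirements (repeated splitting, shrinking diameters, retention of the endpoints $0$ and $\frac12$) that make $A$ a Cantor set.
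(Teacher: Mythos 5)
Your proof is correct, and its overall strategy is the same as the paper's: reduce to \cref{TAB} by producing a Cantor set $A\subseteq[0,\frac{1}{2}]$ containing $0$ and $\frac{1}{2}$ such that $A+B$ avoids a countable dense set $D=\{t_n\}$ chosen in the complement of $B\cup(B+\frac{1}{2})$ (this last condition being exactly what keeps $0$ and $\frac{1}{2}$ out of $D-B$, so that the required endpoints survive); the verification that $A+B$ is perfect, contains $0$ and $1$, and is nowhere dense because it misses the dense set $D$, as well as the final appeal to \cref{TAB}, are identical in both arguments. Where you genuinely diverge is in how $A$ is obtained. The paper observes that $D-B$ is meager, so $\mathbb{R}\setminus(D-B)$ is a comeager Borel set, and invokes the perfect set theorem for Borel sets to extract a Cantor set $A$ disjoint from $D-B$; you instead build $A=\bigcap_n A_n$ explicitly by nested finite unions of intervals, removing the single closed nowhere dense obstruction $t_n-B$ at stage $n$ and exploiting the fact that disjointness from a closed set persists under passing to subsets. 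Your route is more elementary and self-contained, and it makes fully transparent a point the paper has to relegate to a footnote, namely that $A$ must contain the prescribed endpoints $0$ and $\frac{1}{2}$ -- the perfect set theorem does not by itself let one prescribe particular points of the Cantor set it produces, whereas your construction retains $0$ and $\frac{1}{2}$ at every stage. The paper's route is shorter and avoids the bookkeeping of the nested construction. Both are valid proofs.
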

\begin{proof}
Since $B\cup(B+\frac{1}{2})$ does not contain any interval, we can choose a countable dense set $D$ in $\mathbb{R}\setminus(B\cup(B+\frac{1}{2}))$. Note that $0,\frac{1}{2}\not\in D-B$. Otherwise, $D\cap B\neq\emptyset$ or $D\cap(B+\frac{1}{2})\neq\emptyset$, which contradicts that $D\subseteq\mathbb{R}\setminus(B\cup(B+\frac{1}{2}))$. The set $D-B=\bigcup_{d\in D}d-B$ is a countable union of meager sets, and so it is also meager. Hence, $\mathbb{R}\setminus(D-B)$ is comeager in $\mathbb{R}$, and therefore contains a dense $G_\delta$ subset of $\mathbb{R}$. Since it is Borel and uncountable in every nontrivial closed interval, it also contains a Cantor set $A\subseteq[0,\frac{1}{2}]$, by the perfect set theorem for Borel sets.\footnote{See \cite[Theorem 13.6]{MR1321597}. Note that we additionally require from $A$ to contain $0$ and $\frac{1}{2}$ in this paper. That is why our $D$ is chosen in such a way to ensure $0,\frac{1}{2}\not\in D-B$, and therefore $0,\frac{1}{2}\in\mathbb{R}\setminus(D-B)$.} Note that $A$ is chosen from $\mathbb{R}\setminus(D-B)$, and therefore $A\cap(D-B)=\emptyset$.

With the Cantor set $A\subseteq[0,\frac{1}{2}]$ determined, we claim that $A+B\subseteq[0,1]$ is also a Cantor set. First, it is easy to see that $A+B$ is a perfect subset of $[0,1]$ containing $0$ and $1$. In particular, $A+B$ is closed, so to show that it is also nowhere dense, it suffices to prove that $A+B$ has empty interior. On the contrary, suppose that $A+B$ contains some nontrivial interval. Then this interval has nonempty intersection with the dense set $D$. Hence $(A+B)\cap D\neq\emptyset$. Then there exist some $a\in A$, $b\in B$, $d\in D$ such that $a+b=d$. This implies that $a=d-b\in D-B$, contradicting the fact that $A\cap(D-B)=\emptyset$.

Finally, since both $A\subseteq[0,\frac{1}{2}]$ and $A+B\subseteq[0,1]$ are Cantor sets, by \cref{TAB}, there is a Cantor set $\mathcal{C}\subseteq[0,1]$ such that $B+\frac{1}{2}\subseteq S\cap[\frac{1}{2},1]$, where $S\coloneqq[-1,1]\setminus(\mathcal{C}^c-\mathcal{C})$.
\end{proof}

To ultimately show that $S$ can have positive Lebesgue measure, we begin with a Cantor set $B\subseteq[0,\frac{1}{2}]$ of positive Lebesgue measure. We want to carefully verify that the proof of \cref{TB} remains valid in this context. One might wonder: {\em what if the other Cantor set $A\subseteq[0,\frac{1}{2}]$, chosen from $\mathbb{R}\setminus(D-B)$, also has positive Lebesgue measure?} This would be catastrophic, as $A+B$ would then contain an interval by Steinhaus Theorem, and thus could not be a Cantor set. If $A+B$ fails to be a Cantor set, then \cref{TAB} would no longer apply, and the entire argument would fall apart. The following remark ensures that such a scenario cannot occur.

\begin{remark}\label{RB}
{\em If $B\subseteq[\frac{1}{2},1]$ has positive Lebesgue measure and $D$ is a dense subset of $\mathbb{R}$, then $\mathbb{R}\setminus(D-B)$ must have Lebesgue measure zero.}
\end{remark}
\begin{proof}
Suppose $\mathbb{R}\setminus(D-B)$ has positive Lebesgue measure, then $(\mathbb{R}\setminus(D-B))+B$ contains an interval, by Steinhaus theorem. Since $D$ is dense, there is $q\in D$ such that $q\in(\mathbb{R}\setminus(D-B))+B$ which leads to a contradiction that $q-b\in\mathbb{R}\setminus(D-B)$ for some $b\in B$.\footnote{The arguments are identical to those used in the proof of \cite[Proposition 7]{MR4559372}. In fact, this paper is originally motivated by our initial efforts to search for the $F_\sigma$ set described in \cite[Remark 1]{MR4559372}.}
\end{proof}

Finally, we state the main results of this section.

\begin{corollary}\label{CSFC}
There is a Cantor set $\mathcal{C}\subseteq[0,1]$ such that
\begin{center}
$[-1,1]\setminus(\mathcal{C}^c-\mathcal{C})$ contains a  Cantor set of positive Lebesgue measure.
\end{center}
\end{corollary}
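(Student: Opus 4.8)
The plan is to deduce \cref{CSFC} from \cref{TB} together with \cref{RB}, by feeding in a fat Cantor set for $B$ and checking that the construction in the proof of \cref{TB} survives. So first I would fix a Cantor set $B\subseteq[0,\frac{1}{2}]$ of positive Lebesgue measure that contains both $0$ and $\frac{1}{2}$; such a set exists by the standard ``fat Cantor set'' construction (remove centered open intervals of rapidly shrinking total length). The goal is then to produce the Cantor set $\mathcal{C}\subseteq[0,1]$ guaranteed by \cref{TAB}, for which $B+\frac{1}{2}\subseteq S\cap[\frac{1}{2},1]$; since translation preserves Lebesgue measure, $B+\frac{1}{2}$ is a Cantor set of positive measure sitting inside $S=[-1,1]\setminus(\mathcal{C}^c-\mathcal{C})$, which is exactly the claim.

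The one thing that needs care — and the step I expect to be the main obstacle — is that the argument in \cref{TB} hinges on being able to pick a Cantor set $A\subseteq[0,\frac{1}{2}]$ inside $\mathbb{R}\setminus(D-B)$ such that $A+B$ is again a Cantor set, and the proof of that used only the \emph{meagerness} of $\mathbb{R}\setminus(D-B)$'s complement to rule out $A+B$ containing an interval via the dense set $D$. When $B$ has positive measure, one might worry the same $A$ could also have positive measure, in which case Steinhaus would force $A+B$ to contain an interval and the construction would collapse. But this is precisely what \cref{RB} prevents: since $B+\frac{1}{2}\subseteq[\frac{1}{2},1]$ has positive measure and $D$ is dense, $\mathbb{R}\setminus(D-(B+\frac{1}{2}))$ — equivalently the set from which $A$ is extracted, up to the harmless translation bookkeeping already done in \cref{TB} — has measure zero. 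Hence any $A$ chosen from that set is a measure-zero Cantor set, $A+B$ cannot contain an interval, and so $A+B$ is genuinely a Cantor set, exactly as the proof of \cref{TB} requires.

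Concretely, the steps in order are: (1) take $B\subseteq[0,\frac{1}{2}]$ a Cantor set with $m(B)>0$ and $0,\frac{1}{2}\in B$; (2) run verbatim the proof of \cref{TB} with this $B$ — choose $D$ countable and dense in $\mathbb{R}\setminus(B\cup(B+\frac{1}{2}))$, note $0,\frac{1}{2}\notin D-B$, so $\mathbb{R}\setminus(D-B)$ is a comeager Borel set containing $0$ and $\frac{1}{2}$, and extract from it a Cantor set $A\subseteq[0,\frac{1}{2}]$ with $0,\frac{1}{2}\in A$; (3) invoke \cref{RB} to note that $\mathbb{R}\setminus(D-B)$ has measure zero, so $m(A)=0$; (4) conclude, as in \cref{TB}, that $A+B\subseteq[0,1]$ is a Cantor set (perfect, closed, contains $0$ and $1$, and has empty interior because any interior interval would meet $D$, contradicting $A\cap(D-B)=\emptyset$ — with positive measure of $A+B$ now also impossible to exploit, the meagerness argument still does the job); (5) apply \cref{TAB} to $A$ and $A+B$ to obtain $\mathcal{C}\subseteq[0,1]$ with $B+\frac{1}{2}\subseteq S\cap[\frac{1}{2},1]$; (6) observe $B+\frac{1}{2}$ is a Cantor set with $m(B+\frac{1}{2})=m(B)>0$ contained in $S$. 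This completes the proof. The only genuinely new ingredient beyond \cref{TB} is the appeal to \cref{RB} in step (3), and the rest is a matter of checking that nothing in the earlier proofs used $m(B)=0$.
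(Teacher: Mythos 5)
Your proposal is correct and follows essentially the same route as the paper: the paper obtains \cref{CSFC} by applying \cref{TB} to a fat Cantor set $B\subseteq[0,\frac{1}{2}]$ containing $0$ and $\frac{1}{2}$, with \cref{RB} serving exactly the role you give it --- confirming that the auxiliary set $A$ extracted from $\mathbb{R}\setminus(D-B)$ necessarily has measure zero, so the Steinhaus theorem cannot derail the construction of the Cantor set $A+B$. Your step-by-step verification that nothing in the proofs of \cref{TAB,TB} used $m(B)=0$ is precisely the ``careful verification'' the paper sketches in the paragraph preceding \cref{RB}.
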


\begin{corollary}\label{CSPM}
Let $\mathcal{C}\subseteq[0,1]$ be a Cantor set, and define $S\coloneqq[-1,1]\setminus(\mathcal{C}^c-\mathcal{C})$. Then
\begin{center}
$\sup(m(S))=\frac{1}{2}$, or equivalently, $\inf(m(\mathcal{C}^c-\mathcal{C}))=\frac{3}{2}$.
\end{center}
\end{corollary}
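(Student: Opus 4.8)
The plan is to pair the upper bound already in hand with a construction that realizes values of $m(S)$ arbitrarily close to $\frac{1}{2}$. By \cref{CSMB} we know $0\le m(S)<\frac{1}{2}$ for every Cantor set $\mathcal{C}\subseteq[0,1]$, so $\frac{1}{2}$ is an upper bound for $m(S)$; what remains is to show it is the \emph{least} upper bound, i.e.\ that for each $\varepsilon>0$ there is a Cantor set $\mathcal{C}\subseteq[0,1]$ with $m(S)>\frac{1}{2}-\varepsilon$. Once this is done, $\sup(m(S))=\frac{1}{2}$, and the equivalent formulation is immediate from $m(\mathcal{C}^c-\mathcal{C})=m([-1,1])-m(S)=2-m(S)$, which gives $\inf(m(\mathcal{C}^c-\mathcal{C}))=2-\sup(m(S))=\frac{3}{2}$.

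For the lower bound on the supremum, I would fix $\varepsilon\in(0,\frac{1}{2})$ and take a ``fat'' Cantor set $B\subseteq[0,\frac{1}{2}]$ with $0,\frac{1}{2}\in B$ and $m(B)>\frac{1}{2}-\varepsilon$. Such a $B$ is produced by the standard Smith--Volterra--Cantor construction carried out inside $[0,\frac{1}{2}]$: iteratively remove concentric open intervals whose total length is chosen less than $\varepsilon$, so that the resulting compact nowhere dense set retains measure greater than $\frac{1}{2}-\varepsilon$. In particular $B$ is compact and meager and contains $0$ and $\frac{1}{2}$, so it satisfies the hypotheses of \cref{TB}.

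Next I would invoke \cref{TB} for this $B$, obtaining a Cantor set $\mathcal{C}\subseteq[0,1]$ with $B+\frac{1}{2}\subseteq S\cap[\frac{1}{2},1]$, where $S\coloneqq[-1,1]\setminus(\mathcal{C}^c-\mathcal{C})$. Since Lebesgue measure is translation invariant, $m(B+\tfrac{1}{2})=m(B)>\frac{1}{2}-\varepsilon$, whence
\[
m(S)\ \ge\ m\!\left(S\cap[\tfrac{1}{2},1]\right)\ \ge\ m\!\left(B+\tfrac{1}{2}\right)\ >\ \tfrac{1}{2}-\varepsilon .
\]
Letting $\varepsilon\to 0^{+}$ shows $\sup(m(S))\ge\frac{1}{2}$, and combined with \cref{CSMB} we get $\sup(m(S))=\frac{1}{2}$, as desired.

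The only point that requires care is that \cref{TB} must genuinely apply when $B$ has positive Lebesgue measure; this is exactly the concern settled by \cref{RB}. In the proof of \cref{TB} the auxiliary Cantor set $A$ is extracted from $\mathbb{R}\setminus(D-B)$, and by \cref{RB} that set has Lebesgue measure zero, so $A$ has measure zero and $A+B$ cannot contain an interval via the Steinhaus theorem, while the density of $D$ already forces $A+B$ to be nowhere dense. Hence $A$ and $A+B$ are honest Cantor sets and \cref{TAB}, and therefore \cref{TB}, goes through unchanged. Apart from this, the argument is routine bookkeeping with measures of translated sets, so I expect no further obstacle.
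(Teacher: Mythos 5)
Your proposal is correct and follows essentially the same route as the paper: take a fat Cantor set $B\subseteq[0,\tfrac{1}{2}]$ with $m(B)>\tfrac{1}{2}-\varepsilon$ containing $0$ and $\tfrac{1}{2}$, feed it into \cref{TB}, and combine the resulting lower bound with the upper bound from \cref{CSMB}. Your extra paragraph checking that \cref{TB} survives $m(B)>0$ via \cref{RB} matches the discussion the paper places just before \cref{RB}, so there is nothing to add.
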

\begin{proof}
Indeed, it is well known that for every $\varepsilon\in(0,\frac{1}{2})$, there exists a Cantor set $B\subseteq[0,\frac{1}{2}]$ such that $m(B)>\frac{1}{2}-\varepsilon$. The Cantor set $B\subseteq[0,\frac{1}{2}]$ is compact, meager and contains $0$ and $\frac{1}{2}$. By \cref{TB}, there is a Cantor set $\mathcal{C}\subseteq[0,1]$ such that $B+\frac{1}{2}\subseteq S\cap[\frac{1}{2},1]$. Since
\begin{align*}
\textstyle m(S)\geq m(S\cap[\frac{1}{2},1])\geq m(B+\frac{1}{2})=m(B)>\frac{1}{2}-\varepsilon\mbox{,}
\end{align*}
we get that $\frac{1}{2}$ is the least upper bound for $m(S)$.
\end{proof}

In the end, let us revisit \cref{TB}. As discussed, the set $S\coloneqq[-1,1]\setminus(\mathcal{C}^c-\mathcal{C})$ can contain Cantor sets of various type. Recall that the set $S$ is the complement of a dense open set $\bigcup_{t\in\mathcal{C}}\mathcal{C}^c-t$ and is therefore closed and nowhere dense. This motivated our final question:
\begin{center}
{\em Can the set $S$ itself be a Cantor set?}
\end{center}
Our final theorem shows that it cannot.

\begin{theorem}\label{RNC}
Let $\mathcal{C}\subseteq[0,1]$ be a Cantor set, and define $S\coloneqq[-1,1]\setminus(\mathcal{C}^c-\mathcal{C})$. Then
\begin{center}
$S$ cannot be a Cantor set.
\end{center}
\end{theorem}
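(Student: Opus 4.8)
The plan is to show that the point $0$ is always an \emph{isolated} point of $S$. Since a Cantor set is by definition perfect (see \cref{DC}) and perfect sets have no isolated points, this alone forces the conclusion that $S$ cannot be a Cantor set, for \emph{any} choice of $\mathcal{C}$.

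First I would record that $0\in S$ for every Cantor set $\mathcal{C}\subseteq[0,1]$: if $0=x-c$ with $x\in\mathcal{C}^c$ and $c\in\mathcal{C}$, then $x=c$, which is impossible because $\mathcal{C}\cap\mathcal{C}^c=\emptyset$; hence $0\notin\mathcal{C}^c-\mathcal{C}$, i.e.\ $0\in S$. Next I would exhibit a punctured neighbourhood of $0$ lying entirely inside $\mathcal{C}^c-\mathcal{C}$. Since $\mathcal{C}$ is nowhere dense and contains $0$ and $1$, the complement $\mathcal{C}^c$ is a nonempty open subset of $(0,1)$, so $\mathcal{C}$ has at least one gap $G$, and its endpoints $l(G),r(G)$ belong to $\mathcal{C}$. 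For $y\in(0,|G|)$ one has $l(G)+y\in G\subseteq\mathcal{C}^c$, so $y=(l(G)+y)-l(G)\in\mathcal{C}^c-\mathcal{C}$; symmetrically, for $y\in(-|G|,0)$ one has $r(G)+y\in G\subseteq\mathcal{C}^c$, so $y=(r(G)+y)-r(G)\in\mathcal{C}^c-\mathcal{C}$. (This is exactly (i) of \cref{LLG} applied to the degenerate intervals $[a,b]=[l(G),l(G)]$ and $[a,b]=[r(G),r(G)]$, for which the hypotheses hold vacuously.) Consequently $(-|G|,|G|)\setminus\{0\}\subseteq\mathcal{C}^c-\mathcal{C}$, and therefore $(-|G|,|G|)\cap S=\{0\}$, so $0$ is isolated in $S$.

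To finish, I would simply invoke that a Cantor set contains no isolated points, while $S$ has the isolated point $0$; hence $S$ is not a Cantor set. I do not anticipate a genuine obstacle: the only points needing a moment of care are that $\mathcal{C}$ always possesses at least one gap (immediate from nowhere‑density, since $\mathcal{C}^c$ is dense and open in $[0,1]$) and that the endpoints of a gap lie in $\mathcal{C}$ (they are boundary points of $\mathcal{C}^c$ inside $[0,1]$, hence in $\mathcal{C}$). Everything else is elementary.
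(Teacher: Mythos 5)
Your proof is correct and follows essentially the same route as the paper: both establish that $0$ is an isolated point of $S$ by showing $(-|G|,0)\cup(0,|G|)\subseteq\mathcal{C}^c-\mathcal{C}$ for any gap $G$ (the paper invokes (i) of \cref{LLG} with the degenerate intervals $[l(G),l(G)]$ and $[r(G),r(G)]$, exactly as you observe), and then conclude that $S$ fails to be perfect. Your direct verification that $y=(l(G)+y)-l(G)$ and $y=(r(G)+y)-r(G)$ land in $\mathcal{C}^c-\mathcal{C}$ is just an unwinding of that lemma, so there is no substantive difference.
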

\begin{proof}
In particular, $0$ is always an isolated point of $S$. To see this, let $G$ be any gap of $\mathcal{C}$, and so $l(G),r(G)\in\mathcal{C}$. Trivially, $G$ is strictly longer than every gap of $\mathcal{C}\cap[l(G),l(G)]$ as well as every gap of $\mathcal{C}\cap[r(G),r(G)]$. It then follows from (i) of \cref{LLG} that $(-|G|,0)\cup(0,|G|)\subseteq\mathcal{C}^c-\mathcal{C}$. Therefore, $0\in S$ is isolated.
\end{proof}

\begin{remark}\label{RSD}
{\em By the Cantor-Bendixson theorem (see \cite[Theorem 6.4]{MR1321597}), every closed set can be uniquely presented as a union of two disjoint sets: a countable one and a perfect one. So, $S=A\cup B$, where $A$ is a countable set and $B$ is a perfect set. Since $S$ is nowhere dense, $B$ is also nowhere dense, and thus it is either a Cantor set or an empty set.}
\end{remark}

\begin{remark}\label{RNE}
{\em The Cantor set part in the decomposition described in \cref{RSD} may be empty as we could see, for example, in \cref{CCI,TS3}. However, the countable part cannot be empty, as it must contain all isolated points of $S$.}
\end{remark}

\bibliographystyle{amsplain}
\begin{bibdiv}
\begin{biblist}
\bib{MR2534296}{article}{
	author={Anisca, Razvan},
	author={Chlebovec, Christopher},
	title={On the structure of arithmetic sums of Cantor sets with constant ratios of dissection},
	journal={Nonlinearity},
	volume={22},
	date={2009},
	number={9},
	pages={2127--2140},
	issn={0951-7715},
	doi={10.1088/0951-7715/22/9/004},
	review={\MR{2534296}},
}

\bib{MR2802305}{article}{
	author={Damanik, David},
	author={Gorodetski, Anton},
	title={Spectral and Quantum Dynamical Properties of the Weakly Coupled Fibonacci Hamiltonian},
	journal={Comm. Math. Phys.},
	volume={305},
	date={2010},
	number={1},
	pages={221--277},
	issn={0010-3616},
	doi={10.1007/s00220-011-1220-2},
	review={\MR{2802305}},
}

\bib{MR1996162}{book}{
	author={Gelbaum, Bernard R.},
	author={Olmsted, John M. H.},
	title={Counterexamples in analysis},
	note={Corrected reprint of the second (1965) edition},
	publisher={Dover Publications, Inc.},
	address={Mineola, NY},
	date={2003},
	pages={xxiv+195},
	isbn={0-486-42875-3},
	review={\MR{1996162}},
}

\bib{MR22568}{article}{
	author={Hall, Jr., Marshall},
	title={On the sum and product of continued fractions},
	journal={Ann. of Math. (2)},
	volume={48},
	date={1947},
	number={4},
	pages={966--993},
	issn={0003-486X},
	doi={10.2307/1969389},
	review={\MR{22568}},
}

\bib{MR1321597}{book}{
	author={Kechris, Alexander S.},
	title={Classical descriptive set theory},
	series={Graduate Texts in Mathematics},
	volume={156},
	publisher={Springer-Verlag},
	address={New York},
	date={1995},
	pages={xviii+402},
	isbn={0-387-94374-9},
	review={\MR{1321597}},
	doi={10.1007/978-1-4612-4190-4},
}

\bib{MR1289273}{article}{
	author={Kraft, Roger L.},
	title={What's the difference between Cantor sets?},
	journal={Amer. Math. Monthly},
	volume={101},
	date={1994},
	number={7},
	pages={640--650},
	issn={0002-9890},
	doi={10.1080/00029890.1994.11997005},
	review={\MR{1289273}},
}

\bib{MR1637408}{article}{
	author={Mendes, Pedro},
	title={Sum of Cantor sets: self-similarity and measure},
	journal={Proc. Amer. Math. Soc.},
	volume={127},
	date={1999},
	number={11},
	pages={3305--3308},
	issn={0002-9939},
	doi={10.1090/S0002-9939-99-05107-2},
	review={\MR{1637408}},
}

\bib{MR1267692}{article}{
	author={Mendes, Pedro},
	author={Oliveira, Fernando},
	title={On the topological structure of the arithmetic sum of two Cantor sets},
	journal={Nonlinearity},
	volume={7},
	date={1994},
	number={2},
	pages={329--343},
	issn={0951-7715},
	doi={10.1088/0951-7715/7/2/002},
	review={\MR{1267692}},
}

\bib{MR2566156}{article}{
	author={M\'{o}ra, P\'{e}ter},
	author={Simon, K\'{a}roly},
	author={Solomyak, Boris},
	title={The Lebesgue measure of the algebraic difference of two random Cantor sets},
	journal={Indag. Math. (N.S.)},
	volume={20},
	date={2009},
	number={1},
	pages={131--149},
	issn={0019-3577},
	doi={10.1016/S0019-3577(09)80007-4},
	review={\MR{2566156}},
}

\bib{MR4535243}{article}{
	author={Nowakowski, Piotr},
	title={When the algebraic difference of two central Cantor sets is an interval?},
	journal={Ann. Fenn. Math.},
	volume={48},
	date={2023},
	number={1},
	pages={163--185},
	issn={2737-0690},
	doi={10.54330/afm.126014},
	review={\MR{4535243}},
}

\bib{MR4824840}{article}{
	author={Nowakowski, Piotr},
	title={Characterization of the algebraic difference of special affine Cantor sets},
	journal={Topol. Methods Nonlinear Anal.},
	volume={64},
	date={2024},
	number={1},
	pages={295--316},
	issn={1230-3429},
	doi={10.12775/TMNA.2023.057},
	review={\MR{4824840}},
}

\bib{MR893866}{book}{
	author={Palis, Jacob},
	title={Homoclinic orbits, hyperbolic dynamics and dimensions of Cantor sets},
	booktitle={The Lefschetz centennial conference, Part {III} (Mexico City, 1984)},
	edition={},
	series={Contemp. Math.},
	volume={58.3},
	publisher={Amer. Math. Soc.},
	address={Providence, RI},
	date={1987},
	pages={203--216},
	isbn={0-8218-5064-4},
	review={\MR{893866}},
	doi={10.1090/conm/058.3/893866},
}

\bib{MR4559372}{article}{
	author={Pan, Cheng-Han},
	title={Nowhere-monotone differentiable functions and set of monstrous shift},
	journal={J. Math. Anal. Appl.},
	volume={525},
	date={2023},
	number={2},
	pages={Paper No. 127176},
	issn={0022-247X},
	doi={10.1016/j.jmaa.2023.127176},
	review={\MR{4559372}},
}

\bib{MR1491873}{article}{
	author={Peres, Yuval},
	author={Solomyak, Boris},
	title={Self-similar measures and intersections of Cantor sets},
	journal={Trans. Amer. Math. Soc.},
	volume={350},
	date={1998},
	number={10},
	pages={4065--4087},
	issn={0002-9947},
	doi={doi.org/10.1090/S0002-9947-98-02292-2},
	review={\MR{1491873}},
}

\bib{MR4574153}{article}{
	author={Pourbarat, Mehdi},
	title={Topological structure of the sum of two homogeneous Cantor sets},
	journal={Ergodic Theory Dynam. Systems},
	volume={43},
	date={2023},
	number={5},
	pages={1712--1736},
	issn={0143-3857},
	doi={10.1017/etds.2021.156},
	review={\MR{4574153}},
}

\bib{MR0924157}{book}{
	author={Rudin, Walter},
	title={Real and complex analysis},
	edition={3},
	series={},
	volume={},
	publisher={McGraw-Hill Book Co.},
	address={New York},
	date={1987},
	pages={xiv+416},
	isbn={0-07-054234-1},
	doi={},
	review={\MR{0924157}},
}

\bib{MR1153749}{article}{
	author={Sannami, Atsuro},
	title={An example of a regular Cantor set whose difference set is a Cantor set with positive measure},
	journal={Hokkaido Math. J.},
	volume={21},
	date={1992},
	number={1},
	pages={7--24},
	issn={0385-4035},
	doi={10.14492/hokmj/1381413267},
	review={\MR{1153749}},
}

\bib{MR1617830}{article}{
	author={Solomyak, Boris},
	title={On the measure of arithmetic sums of Cantor sets},
	journal={Indag. Math. (N.S.)},
	volume={8},
	date={1997},
	number={1},
	pages={133--141},
	issn={0019-3577},
	doi={10.1016/S0019-3577(97)83357-5},
	review={\MR{1617830}},
}

\bib{MR4080557}{article}{
	author={Sadhukhan, Arpan},
	title={An alternative proof of Steinhaus's theorem},
	journal={Amer. Math. Monthly},
	volume={127},
	date={2020},
	number={4},
	pages={330},
	issn={0002-9890},
	doi={10.1080/00029890.2020.1711693},
	review={\MR{4080557}},
}

\bib{St}{article}{
	author={Steinhaus, Hugo},
	title={Nowa w{\l}asno\'{s}\'{c}mnogo\'{s}ci Cantora},
	journal={Wektor},
	volume={6},
	date={1917},
	pages={105--107}
}

\bib{Steinhaus1920}{article}{
	author={Steinhaus , Hugo},
	title={Sur les distances des points dans les ensembles de mesure positive},
	language={French},
	journal={Fund. Math.},
	volume={1},
	date={1920},
	number={1},
	pages={93--104},
	issn={0016-2736},
	doi={10.4064/fm-1-1-93-104},
	review={},
}

\bib{MR3950055}{article}{
	author={Takahashi, Yuki},
	title={Sums of two self-similar Cantor sets},
	journal={J. Math. Anal. Appl.},
	volume={477},
	date={2019},
	number={1},
	pages={613--626},
	issn={1096-0813},
	doi={10.1016/j.jmaa.2019.04.051},
	review={\MR{3950055}},
}

\end{biblist}
\end{bibdiv}
\end{document}